\newtheorem{thrm}{Theorem}
\newtheorem{lem}{Lemma}
\newtheorem{cor}{Corollary}
\DeclareMathAlphabet{\mathpzc}{OT1}{pzc}{m}{it}
\newcommand{\dl}{\delta}
\newcommand{\lt}{\left}
\newcommand{\rt}{\right}
\newcommand{\ra}{\rightarrow}
\newcommand{\RA}{\Rightarrow}
\newcommand{\tb}{\textbf}
\newcommand{\ti}{\textit}
\newcommand{\fa}{\forall}
\newcommand{\N}{\mathbb{N}}
\newcommand{\R}{\mathbb{R}}
\newcommand{\A}{\mathcal{A}}
\newcommand{\LL}{\mathcal{L}}
\newcommand{\C}{\mathcal{C}}
\newcommand{\F}{\mathcal{F}}
\begin{document}
\begin{center}
	\Large{\boldmath{$\mathlarger{\mathlarger{\pi}}$} \bf{and Arc-Length}}\\
{Joseph Amal Nathan\\ Reactor Physics Design Division, BARC, Mumbai - 400085 INDIA}
\end{center}
\tb{Abstract:} We use the classical definitions (i) $\pi$ is the ratio of area to the square of the radius of a circle; (ii) $\pi$ is the ratio of circumference to the diameter of a circle, to prove $\pi$'s existence within the purview of Euclidean geometry. Next we show that the ``arc-length" (Definition 1) is deducible from Euclidean geometry. Then we prove the Non-Euclidean-Axioms(NEA) of Archimedes (Corollary 4 and 5) and that the arc-length integral converges to the arc-length. We justify why `Euclidean Metric' (Definition 5) is a correct metric for arc-length; derive expressions for area, circumference of a circle and finally prove the equivalence of definitions (i) and (ii).\\\\
\tb{Keyword:} Euclidean geometry, $\pi$, bounded sequences, arc-length.\\
\tb{MSC[2010]:} 03F60, 26E40, 28A75, 40A05.

\section{Introduction}
In 250 BC Archimedes had to use NEA to prove the existence of $\pi$. In Calculus the arc-length formula and a proof of `$\sin x/x \ra 1$ as $x \ra 0$' are based on one of these NEA. Hence questions like: ``Is $\pi$ a constant independent of NEA? Are definitions (i) and (ii) equivalent? Is there a justification that the arc-length integral converges to the arc-length?" are still being asked. Yet the NEA have remained unanalysed till today, may be that the belief `arc-length cannot be deduced from Euclidean geometry' was strong. Though the popular article\cite{JS} and its references, gives an idea about discussions on $\pi$, only in references\cite{EA}, \cite{LG} the existence of $\pi$ is addressed. Alternately $\pi$ is defined using $\int^{+1}_{-1}\sqrt{1-x^2}dx$\cite{MS} and circular functions in Real Analysis\cite{WR}, but still it does not help resolve the above queries. Realising that only the classical definitions can provide answers and motivated by $\pi$'s existence proof in real analysis, we revisit definitions (i) and (ii) to find a proof.
\section{Construction}
Refer to Fig-1. We adopt a modified construction of Archimedes. With $r$ as radius and $O$ as center draw a circle. In this circle inscribe a regular $k-$sided polygon with side $L_0$. At all vertices of the inscribed polygon draw tangents to the circle such that adjacent tangents meet to form a circumscribed polygon, for example tangents drawn at the vertices $P,~Q$ of the inscribed polygon meet to form vertex $V$ of the circumscribed polygon. This circumscribed polygon will be regular with $k$ sides and let $U_0$ be the side. Then we have two regular $k$-sided polygons inscribing and circumscribing the circle with perimeters $kL_0$ and $kU_0$ respectively. This is the zeroth stage. In Fig-1 since $k=6$ we have $L_0=r$ and for $k>6$, $L_0 <r$. Again with $O$ as center consider another circle with radius $r'$ and let $L_0'$ be the side of it's zeroth stage regular $k-$sided inscribed polygon. Then from property of similar triangles $L_0 \times r^\prime=L^\prime _0 \times r$, which implies $L_0 \propto r$.
\begin{wrapfigure}{ht}{0.68\textwidth}
	\centering
	\includegraphics[width=0.64\textwidth, trim={10mm 8mm 2mm 8mm}]{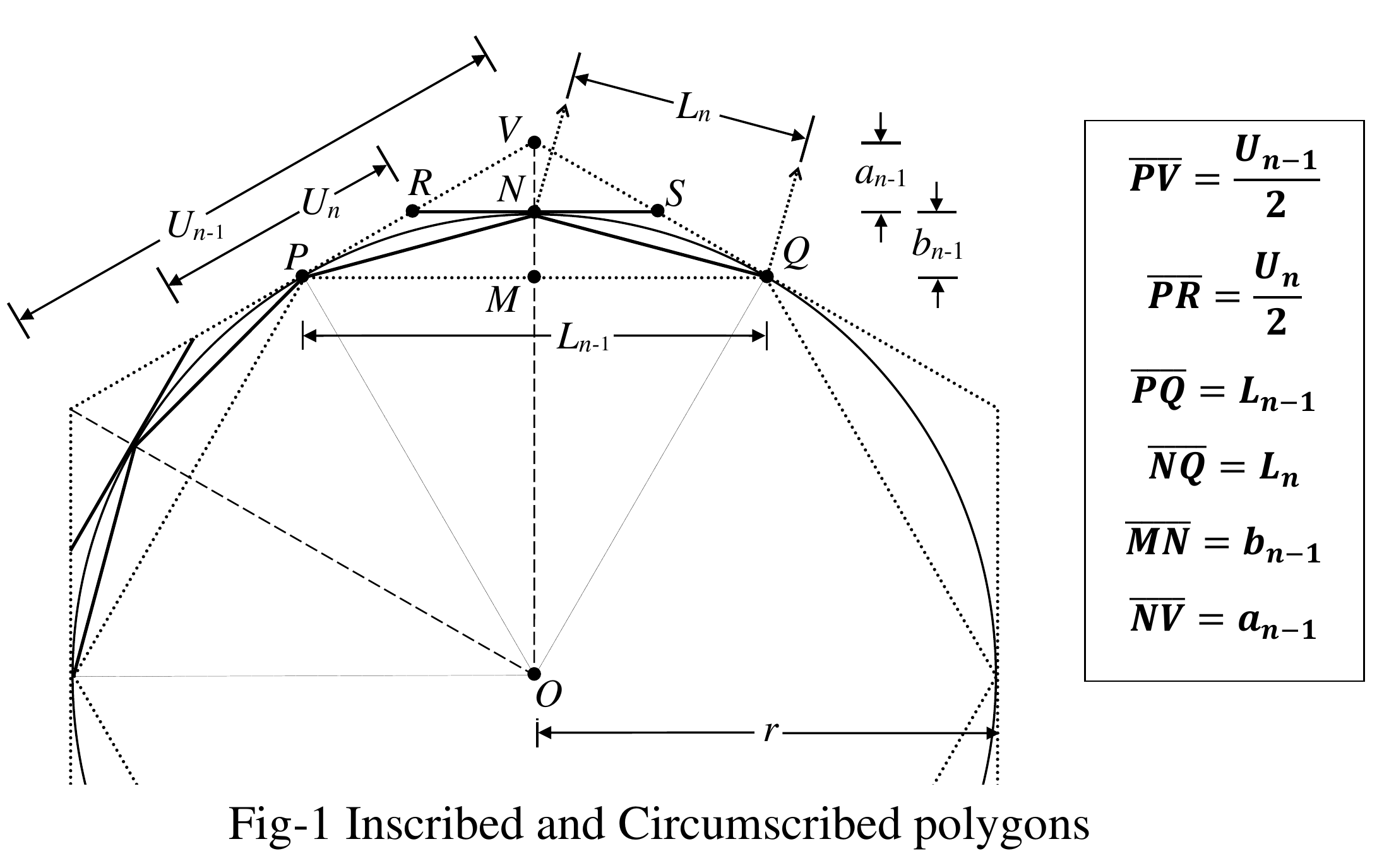}
\end{wrapfigure}
For the next (first) stage draw a line from $O$ to a vertex $V$ of the circumscribed polygon. This line will bisect $\overline{PQ}$ of the inscribed polygon at $M$ and bisect $\hbox{Arc}(PQ)$ of the circle at $N$. Join $P$ and $N$ and similarly $N$ and $Q$. At $N$ draw a tangent to the circle meeting the circumscribed polygon at $R$ and $S$. Then segments $\overline{PN},~\overline{NQ}$ will be a part of the next (first) stage inscribed polygon and segments $\overline{PR},~\overline{RN},~\overline{NS},~\overline{SQ}$ will be a part of the next (first) stage circumscribed polygon. This exercise is repeated at all the remaining $(k-1)$ vertices to get at the next (first) stage two regular $2k$-sided polygons inscribing and circumscribing the circle with side $L_1$, perimeter $2kL_1$ and side $U_1$, perimeter $2kU_1$ respectively. This completes the next (first) stage. When we keep repeating this process at the end of the $n^{th}$ stage we will have two regular polygons of $2^nk$ sides inscribing and circumscribing the circle with side $L_n$, perimeter $P_n^L=2^nkL_n$ and side $U_n$, perimeter $P_n^U=2^nkU_n$ respectively.
\section{Area}
Fig-1 shows the procedure to get the $n^{th}$ stage polygon from the $(n-1)^{th}$ stage polygon. Taking $n=1$ will show the procedure from zeroth to the first stage. In this construction using Pythagoras theorem the following equations can be easily seen satisfied,
\begin{align}
b_{n-1}^2 &= L_n^2 - \lt( \frac{L_{n-1}}{2} \rt)^2, \label{b}\\
(r-b_{n-1})^2 &= r^2 - \lt( \frac{L_{n-1}}{2} \rt)^2, \label{rb}\\
a_{n-1}^2 &= \lt( \frac{U_{n-1}}{2}-\frac{U_{n}}{2} \rt)^2 - \lt( \frac{U_{n}}{2} \rt)^2 = \frac{U_{n-1}}{4}(U_{n-1}-2U_n), \label{a}\\
(r+a_{n-1})^2 &= r^2+ \lt( \frac{U_{n-1}}{2} \rt)^2, \label{ra}\\
(a_{n-1}+b_{n-1})^2 &= \lt( \frac{U_{n-1}}{2} \rt)^2-\lt( \frac{L_{n-1}}{2} \rt)^2. \label{ab}
\end{align} 
Expanding $(r+a_{n-1})^2=[(r-b_{n-1})+(b_{n-1}+a_{n-1})]^2$ and $b_{n-1}^2=[r-(r-b_{n-1})]^2$ and substituting \eqref{b} \eqref{rb}, \eqref{ra}, \eqref{ab} gives
\begin{align*}
\lt( \frac{L_{n-1}}{2} \rt)^4 &= (r-b_{n-1})^2(b_{n-1}+a_{n-1})^2=\lt[ r^2 - \lt( \frac{L_{n-1}}{2} \rt)^2 \rt] \lt[ \lt( \frac{U_{n-1}}{2} \rt)^2-\lt( \frac{L_{n-1}}{2} \rt)^2 \rt],\\
(L_n^2-2r^2)^2 &= 4r^2(r-b_{n-1})^2=4r^2 \lt[ r^2 - \lt( \frac{L_{n-1}}{2} \rt)^2 \rt],
\end{align*}
respectively, which after simplification becomes
\begin{align}
\lt(\frac{U_{n-1}}{2r} \rt) &= \frac{\lt(\frac{L_{n-1}}{2r} \rt)}{\sqrt{1-\lt( \frac{L_{n-1}}{2r} \rt)^2}}, \label{eq:U}\\
2 \lt(\frac{L_n}{2r} \rt)^2 &= 1-\sqrt{1-\lt( \frac{L_{n-1}}{2r} \rt)^2}, \label{eq:L}
\end{align}
respectively. Using above equations we will construct a proof similar to the existence of $e$ in \cite{CG} or of Riemann integral\cite{CG}. Let $A_n^U$ and $A_n^L$ be the areas of the circumscribed and the inscribed polygons. Then,
\begin{lem} \label{L1} The sequences \emph{(i)} $\{ U_n \},~\{ L_n \},~\{ a_n \},~\{ b_n \}$ are strictly decreasing and null, \emph{(ii)} $\{P_n^L\},~\{A_n^L\}$ are strictly increasing, \emph{(iii)} $\{P_n^U\},~\{A_n^U\}$ are strictly decreasing and \emph{(iv)} $\{P_n^L/2r\}$, $\{P_n^U/2r\}$ have values independent of $r$.
\end{lem}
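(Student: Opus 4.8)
The plan is to pass to the dimensionless quantities $x_n = L_n/2r$ and $y_n = U_n/2r$, in terms of which \eqref{eq:L} and \eqref{eq:U} become the clean recurrences $2x_n^2 = 1 - \sqrt{1 - x_{n-1}^2}$ and $y_n = x_n/\sqrt{1-x_n^2}$. Since the construction gives $L_0 \le r$ (equality only at $k=6$), I start from $x_0 \le 1/2$, and from the first recurrence $2x_n^2 = 1 - \sqrt{1-x_{n-1}^2} < 1$ forces $x_n^2 < 1/2$ at every later stage, so the whole sequence stays safely inside $(0,1)$ where the square roots are real and the maps well-defined. Everything is deduced from these two $r$-free recurrences; this already delivers part (iv), because $P_n^L/2r = 2^n k\, x_n$ and $P_n^U/2r = 2^n k\, y_n$ are then functions of $x_0$ and $k$ alone, and $x_0 = L_0/2r$ is independent of $r$ by the similar-triangles relation $L_0 \propto r$ from the construction.

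For part (i) I would first show $\{x_n\}$ strictly decreasing. Writing $t = x_{n-1}^2 \in (0,1)$, the inequality $x_n < x_{n-1}$ is equivalent to $1 - \sqrt{1-t} < 2t$, i.e.\ $\sqrt{1-t} > 1-2t$; this is trivial when $1-2t \le 0$, and after squaring when $1-2t>0$ it reduces to $t(4t-3)<0$, which holds throughout $(0,3/4)$ — a range containing every $t$ we meet since $x_0^2 \le 1/4$. Being decreasing and bounded below by $0$, $\{x_n\}$ converges; a fixed point $x_\ast$ would satisfy $2x_\ast^2 = 1 - \sqrt{1-x_\ast^2}$, whose only root admissible under the constraint $1-2x_\ast^2 \ge 0$ is $x_\ast=0$ (the other algebraic root $x_\ast^2=3/4$ is thereby excluded), so $\{x_n\}$ is null. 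Hence $\{L_n\}=\{2r x_n\}$ is strictly decreasing and null. Since $x/\sqrt{1-x^2}$ is strictly increasing and vanishes at $0$, $\{U_n\}=\{2r y_n\}$ inherits the same behaviour; and because \eqref{rb} and \eqref{ra} give $b_n = r\lt(1 - \sqrt{1-x_n^2}\rt)$ and $a_n = r\lt(\sqrt{1+y_n^2}-1\rt)$, both $\{b_n\}$ and $\{a_n\}$ are strictly decreasing null sequences too, finishing (i).

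For the perimeters, $P_n^L$ strictly increasing is equivalent to $2x_n > x_{n-1}$; with $t=x_{n-1}^2$ this reads $1-\sqrt{1-t} > t/2$, i.e.\ $(1-t/2)^2 > 1-t$, i.e.\ $t^2/4 > 0$, which is immediate. For $\{P_n^U\}$ I read the sign of \eqref{a} directly: $a_{n-1}^2 = \tfrac{U_{n-1}}{4}(U_{n-1}-2U_n) > 0$ forces $U_{n-1} > 2U_n$, hence $P_n^U = 2^n k\, U_n < 2^{n-1}k\, U_{n-1} = P_{n-1}^U$. The areas then follow from the standard apothem-times-perimeter formula for a regular polygon: the circumscribed polygon has apothem exactly $r$, so $A_n^U = \tfrac{r}{2}P_n^U$ is strictly decreasing as a positive multiple of $P_n^U$; the inscribed polygon has apothem $r-b_n$, so $A_n^L = \tfrac12 (r-b_n)P_n^L$ is a product of the two strictly increasing positive sequences $\{r-b_n\}$ and $\{P_n^L\}$, hence strictly increasing. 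This completes (ii) and (iii).

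The square-root inequalities are routine and none is deep. The one step that genuinely requires combining earlier conclusions rather than a single estimate is the monotonicity of $A_n^L$: unlike $A_n^U$, its apothem $r-b_n$ is not constant, so I must use both that the apothem increases (from $\{b_n\}$ decreasing) and that the perimeter increases. The only other point demanding care is the opposite monotonicity of $P_n^L$ and $P_n^U$ while $L_n$ and $U_n$ both decrease — precisely the $2x_n > x_{n-1}$ versus $2U_n < U_{n-1}$ dichotomy — where getting the direction right in each square-root comparison is where a sign slip would do the most damage.
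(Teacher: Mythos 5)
Your proof is correct, and it runs on the same chassis as the paper's --- both extract everything from the recurrences \eqref{eq:U}, \eqref{eq:L} together with \eqref{b}--\eqref{ab} --- but two of your sub-arguments are genuinely different. For the nullity of $\{L_n\}$ you pass to the limit in the recurrence and discard the spurious root: $u=x_\ast^2$ must satisfy $\sqrt{1-u}=1-2u$, so $u\in\{0,3/4\}$, and $u=3/4$ is excluded by the sign constraint. The paper instead sets $k_n=L_{n-1}/L_n$, derives $L_n=\sqrt{4-k_n^2}\,r$, and argues that $l\neq 0$ forces $k_n \ra 1$, hence $L_n>r$ eventually, contradicting the side assertion (stated without proof, though true) that $L_n<r$ for large $n$; your fixed-point version is cleaner and avoids that side assertion entirely. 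For the areas, you invoke the apothem formulas $A_n^U=\tfrac{r}{2}P_n^U$ and $A_n^L=\tfrac12(r-b_n)P_n^L$, so $\{A_n^L\}$ increases as a product of two positive increasing sequences (which, as you note, needs the already-proved decrease of $\{b_n\}$). The paper instead decomposes the next-stage inscribed polygon into the triangles $ONQ$ of area $rL_{n-1}/4$, obtaining the exact proportionality (area) $=r\times$(perimeter at the coarser stage)$/2$, which makes the monotonicity of areas an immediate corollary of that of perimeters --- and, more importantly, this is precisely the relation $A_n^L/r^2=P_n^L/2r$, $A_n^U/r^2=P_n^U/2r$ that the proof of Theorem \ref{T1} consumes later; your apothem route proves the Lemma but does not deliver that identity, so the paper would still need to derive it. Two smaller remarks: the paper gets $2L_n>L_{n-1}$ for free from \eqref{b} (positivity of $b_{n-1}^2$), with no algebra at all; and your normalization $x_0\le 1/2$ presumes $k\ge 6$, which matches the paper's framing but is not needed --- your own case split (the comparison is trivial when $1-2t\le 0$) covers all $t\in(0,1)$, and $x_n^2<1/2$ holds for every $n\ge 1$ irrespective of $x_0$, so nothing in your argument actually breaks for smaller $k$.
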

\begin{proof} Eliminating $(L_{n-1}/2)^2$ from \eqref{b}, \eqref{rb} and  $(U_{n-1}/2)^2$ from \eqref{a}, \eqref{ra} gives $2rb_{n-1}$
$=L_n^2$ and $4ra_{n-1}=U_{n-1}U_n$ respectively. From \eqref{a} we have $U_{n-1} >U_n$, using this in \eqref{eq:U} implies $L_{n-1} >L_n$. Let $l=\hbox{GLB}(\{ L_n \})$, since $\{ L_n \}$ is bounded below by zero and for sufficiently large $n$, $L_n < r$ we have $0 \leq l < r$. Let $k_n=L_{n-1}/L_n$ then from \eqref{b} $L_{n-1} < 2L_n$ implies $1 < k_n < 2$. Substituting for $L_{n-1}$ in \eqref{eq:L} and simplifying gives $L_n[L_n^2-(4-k_n^2)r^2]=0$. If $l \neq 0$ we have $L_n= (\sqrt{4-k_n^2})r$ and as $n \ra \infty$, $L_n \ra l \RA k_n \ra 1$. Then for sufficiently large $n$, $\sqrt{4-k_n^2} >1 \RA L_n >r$ a contradiction. This shows $\{ L_n \}$, from (6) $\{ U_n \}$ and hence $\{ a_n \}$, $\{ b_n \}$ all are strictly decreasing null sequences.\\
From \eqref{ab}, \eqref{b}, \eqref{a} we get $L_{n-1}<U_{n-1}$, $2L_n>L_{n-1}$, $2U_n<U_{n-1}$ respectively. Multiplying these three inequalities by $2^{n-1}k$ gives $P^L_{n-1} < P^U_{n-1}$, $P^L_n > P^L_{n-1}$, $P^U_n < P^U_{n-1}$ respectively. From the later two inequalities we get $\{ P^L_n \}$ and $\{ P^U_n \}$ are strictly increasing and decreasing respectively. As $L_0 \propto r$, we see that $L_0/2r$ and from \eqref{eq:L} $L_n/2r~\fa~ n \in \N$, are independent of $r$. Similarly from \eqref{eq:U} $U_0/2r$ and $U_n/2r~\fa~ n \in \N$ are values independent of~$r$. Then all values in $\{P_n^L/2r\}$ and $\{P_n^U/2r\}$ are independent of~$r$. From Fig-1 area of $\triangle ONQ=r L_{n-1}/4$ then the area of the $n^{th}$ stage inscribed polygon is $A^L_{n-1}=2^nkr L_{n-1}/4=rP_{n-1}^L/2$. Similarly area $\square OPVQ=r U_{n-1}/2$ then the area of the $(n-1)^{th}$ stage circumscribed polygon is $A^U_{n-1}=2^{n-1}kr U_{n-1}/2=rP_{n-1}^U/2$. Hence $\{A_n^L\}$ is strictly increasing and $\{A_n^U\}$ is strictly decreasing.
\end{proof}
\noindent Let $\A$ denote the area of the circle with radius $r$ then,
\begin{thrm} \label{T1} 
	$\A/r^2=$\emph{LUB(}$\{ P^L_n/2r \}$\emph{)}$=$\emph{GLB(}$\{ P^U_n/2r \}$\emph{)} is a constant, independent of r.
\end{thrm}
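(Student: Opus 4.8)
The plan is to exhibit $\{A_n^L\}$ and $\{A_n^U\}$ as two monotone bounded sequences squeezing the geometric area $\A$, to show that their common limit equals $\A$, and then to divide through by $r^2$ so as to transfer the conclusion to the perimeter sequences.

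First I would record the geometric squeeze. Since the $n^{\text{th}}$-stage inscribed polygon lies inside the circle and the circle lies inside the $n^{\text{th}}$-stage circumscribed polygon, monotonicity of area gives $A_n^L < \A < A_n^U$ for every $n \in \N$. By Lemma \ref{L1}(ii)--(iii) the sequence $\{A_n^L\}$ is strictly increasing and bounded above (by $\A$, or by any $A_m^U$), while $\{A_n^U\}$ is strictly decreasing and bounded below; hence by the completeness of $\R$ both $\alpha := \hbox{LUB}(\{A_n^L\})$ and $\beta := \hbox{GLB}(\{A_n^U\})$ exist and satisfy $\alpha \leq \A \leq \beta$. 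Moreover, because every inscribed polygon has smaller area than every circumscribed polygon, $A_m^L < A_n^U$ for all $m,n$, whence $\alpha \leq \beta$.

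The crux is to show $\beta - \alpha = 0$, for which it suffices to prove $A_n^U - A_n^L \ra 0$. Using $A_n^L = rP_n^L/2$ and $A_n^U = rP_n^U/2$ from Lemma \ref{L1}, together with the closed form $U_n = L_n/\sqrt{1-(L_n/2r)^2}$ read off from \eqref{eq:U}, I would write
\begin{equation*}
A_n^U - A_n^L = \frac{r}{2}\,2^nk(U_n - L_n) = \frac{r}{2}\,P_n^L\lt(\frac{1}{\sqrt{1-(L_n/2r)^2}}-1\rt).
\end{equation*}
Here $P_n^L$ is bounded (it is increasing and, by $P_n^L<P_n^U\leq P_0^U$, bounded above), while $L_n \ra 0$ by Lemma \ref{L1}(i), so the bracketed factor tends to $0$; hence $A_n^U - A_n^L \ra 0$. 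Since $0 \leq \beta - \alpha \leq A_n^U - A_n^L$ for every $n$, letting $n \ra \infty$ forces $\alpha = \beta$, and therefore $\alpha = \A = \beta$.

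Finally I would divide through by the positive constant $r^2$. From $A_n^L/r^2 = P_n^L/2r$ and $A_n^U/r^2 = P_n^U/2r$, scaling the supremum and infimum yields $\A/r^2 = \hbox{LUB}(\{P_n^L/2r\}) = \hbox{GLB}(\{P_n^U/2r\})$. Independence of $r$ is then immediate: by Lemma \ref{L1}(iv) each term $P_n^L/2r$ and $P_n^U/2r$ is already independent of $r$, so their supremum and infimum are too, proving $\A/r^2$ is a constant. The main obstacle is the middle step, namely controlling the product $2^nk(U_n-L_n)$ in which the factor $2^nk$ diverges while $U_n-L_n \ra 0$; the resolution is to recognise $2^nkL_n = P_n^L$ as a bounded quantity, so that the vanishing bracket dominates and the gap between the two areas closes.
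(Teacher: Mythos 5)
Your proof is correct, and its skeleton --- monotone bounded area sequences squeezing $\A$, gap tending to zero, then division by $r^2$ with Lemma \ref{L1}(iv) --- matches the paper's. The essential difference is where the vanishing of the gap comes from. The paper controls $\A - A_n^L$ and $A_n^U - \A$ separately and geometrically: it introduces the areas $\dl_n^L,\, \dl_n^U$ of the regions between the chords/tangent-chords and the arcs, reads off Fig-1 the bound $\dl_n^L,\, \dl_n^U \leq \frac{1}{2}(a_n+b_n)L_n$, invokes the nullity of $\{a_n\},\{b_n\},\{L_n\}$ from Lemma \ref{L1}(i), and then adds a nested-interval argument to establish uniqueness of the common value. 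You instead close the gap algebraically: the exact identity $A_n^U-A_n^L=\frac{r}{2}P_n^L\bigl(1/\sqrt{1-(L_n/2r)^2}-1\bigr)$, obtained from \eqref{eq:U} together with $A_n^L=rP_n^L/2$ and $A_n^U=rP_n^U/2$, combined with boundedness of $P_n^L$ (by $P_0^U$) and $L_n \ra 0$. Your route needs less from the figure --- its only geometric input is the containment $A_n^L < \A < A_n^U$, whereas the paper also needs the circular-segment area estimates --- and it makes the paper's separate uniqueness step superfluous, since proving $\hbox{LUB}(\{A_n^L\})=\hbox{GLB}(\{A_n^U\})$ pins $\A$ down at once. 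What the paper's $\dl$-estimates buy in exchange is explicit stage-by-stage control of how far each polygon area is from $\A$ itself, not merely from the other polygon area; but as a proof of the theorem as stated, your version is complete and, if anything, more self-contained on the analytic side.
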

\begin{proof}
	Let $\dl^L_{n-1}= 2^{n-1}k \{$Area between $\overline{PQ}$ and $\hbox{Arc}(PQ) \}$ and $\dl^U_{n-1}=2^{n-1}k \{$Area between $\hbox{Arc}(PN)$ and $\overline{PV}$ + Area between $\hbox{Arc}(NQ)$ and $\overline{VQ} \}$. Then from Fig-1 we have $A^L_{n-1}+\dl^L_{n-1}=\A =A^U_{n-1}-\dl^U_{n-1}$ and $\dl_{n-1}^L, \dl_{n-1}^U \leq (a_{n-1}+b_{n-1})L_{n-1}/2$, which for the $n^{th}$ stage is,
	\begin{align}
	A^L_n+\dl ^L_n&=\A =A^U_n-\dl ^U_n \hbox{~~~and~~~}\dl_n^L, \dl_n^U \leq \frac{1}{2}(a_n+b_n)L_n. \label{A}
	\end{align}
	Since, by Lemma \ref{L1}, $\{A_n^L\}$ is strictly increasing, $\{A_n^U\}$ is strictly decreasing and $\{ L_n \},~\{ a_n \},~\{ b_n \}$ are null, both $\{\dl_n^L\}$ and $\{\dl_n^U\}$ are strictly decreasing null sequences. So for any $n \in \N, ~A^L_n < \A < A^U_n$. Then  LUB($\{ A^L_n \}$) = GLB($\{ A^U_n \}$) = $\A$. This proves the existence of $\A$. Now to show $\A$ is unique, define $I_n=[A^L_n,A^U_n]$ then $\A \in I_n~\fa~n$. Let there exist a $\A^\prime \neq \A$ such that $\A^\prime \in I_n~\fa~n$. Since $\{ A^U_n - A^L_n \}$ is a null sequence, there exists an integer $j$ such that $A^U_j-A^L_j<|\A^\prime-\A| \RA \A^\prime \notin I_j$ a contradiction. Since $A^L_n/r^2=P_n^L/2r$ and $A^U_n/r^2=P_n^U/2r$, LUB($\{P_n^L/2r\}$)=GLB($\{P_n^U/2r\}$)=$\A/r^2$, from Lemma \ref{L1} a constant independent of $r$ and conventionally denoted as $\pi$.
\end{proof}
\begin{cor}
	The area of the circle with radius r is $\pi r^2$.
\end{cor}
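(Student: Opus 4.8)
The plan is to obtain this as an immediate algebraic consequence of Theorem \ref{T1}, where all the substantive work has already been done. Theorem \ref{T1} establishes that the ratio $\A/r^2$ equals the common value LUB$(\{P_n^L/2r\})=$ GLB$(\{P_n^U/2r\})$, and, crucially, that this value is a constant independent of the radius $r$; this constant is by convention the number denoted $\pi$.

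First I would simply invoke the theorem to record the identity $\A/r^2=\pi$. Then, multiplying both sides by $r^2$ — which is legitimate since $r>0$ for any genuine circle — yields $\A=\pi r^2$, the asserted expression for the area. No limiting argument, Pythagorean relation, or monotonicity fact is needed at this stage, since each of those has already been consumed inside the proof of Theorem \ref{T1}.

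The one point worth making explicit is the logical status of $\pi$ in the formula. Because Theorem \ref{T1} shows that the common bound is the \emph{same} number for every radius, the symbol $\pi$ in $\A=\pi r^2$ denotes a single universal constant rather than a quantity that must be recomputed circle by circle; this is exactly what makes $\A=\pi r^2$ a genuine theorem about all circles rather than a disguised restatement of the definition of area. I would therefore phrase the conclusion so that the $r$-independence clause of Theorem \ref{T1} is cited, not merely the equality of the two bounds.

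Accordingly, there is essentially no obstacle to overcome in this corollary: the difficulty — establishing existence, uniqueness, and radius-independence of LUB$(\{P_n^L/2r\})=$ GLB$(\{P_n^U/2r\})$ — was entirely absorbed into Theorem \ref{T1}, and the corollary is its one-line rearrangement.
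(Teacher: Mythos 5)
Your proposal is correct and follows exactly the paper's own argument: cite Theorem \ref{T1} to obtain $\A/r^2=\pi$ and multiply through by $r^2$. Your additional remark on the $r$-independence clause is a fair gloss but adds nothing beyond what the theorem statement already carries.
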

\begin{proof}
	From Theorem \ref{T1} we have $\A/r^2=\pi \RA \A=\pi r^2$.
\end{proof}
\section{Circumference and Arc-Length}
In this section `\ti{curve}' means the following,\\
\tb{Definition 1.} \ti{A \tb{curve} is a continuous function $f:[a,b] \ra \R$, such that tangent exists for $f(x)$ on $[a,b]$.}\\
\tb{Definition 2.} \ti{Distance between two points along a section of a curve is defined as \tb{arc-length}.}\\
Let $\Omega=\{a=x_0<x_1<...<x_{n-1}<x_n=b \}$, $n \geq 1$ be a partition of $[a,b]$ and $\Gamma(\Omega)= \underset{j=1}{\overset{n}{\sum}} \sqrt{(x_j-x_{j-1})^2+(f(x_j)-f(x_{j-1}))^2}$.\\
\tb{Definition 3.} \ti{If there exists a positive number $M$ such that for all possible partitions $\Omega$, $\Gamma(\Omega) < M$, then the curve $f(x)$ is said to be \tb{rectifiable} on $[a,b]$.}\\
\tb{Definition 4.} \ti{Determining the arc-length of a curve is called \tb{rectification}.}\\
For points $(x_1,y_1)$ and $(x_2,y_2)$ in a plane,\\
\tb{Definition 5.} \ti{\tb{Euclidean metric} is defined as $\sqrt{(x_1-x_2)^2+(y_1-y_2)^2}$.}\\
\tb{Definition 6.} \ti{\tb{Taxicab metric}}\cite{EK} \ti{is defined as $|x_1-x_2|+|y_1-y_2|$.}\\\\
Archimedes gets bounds for $\pi$ by showing that the circumference $\mathcal C$ of a circle lies between the perimeters of the inscribed and circumscribed polygon, which followed from convexity and the NEA. With these he shows that the circle is rectifiable. For details refer to the book by Thomas L. Heath\cite{TH}. Archimedes definition of convexity and the proposed axioms can be considered as the first rigorous attempt for the rectification of a curve\cite{DR}. 

\begin{wrapfigure}{h}{0.50\textwidth}
	\centering	
	\includegraphics[width=0.48\textwidth, trim={6mm 6mm 0 2mm}]{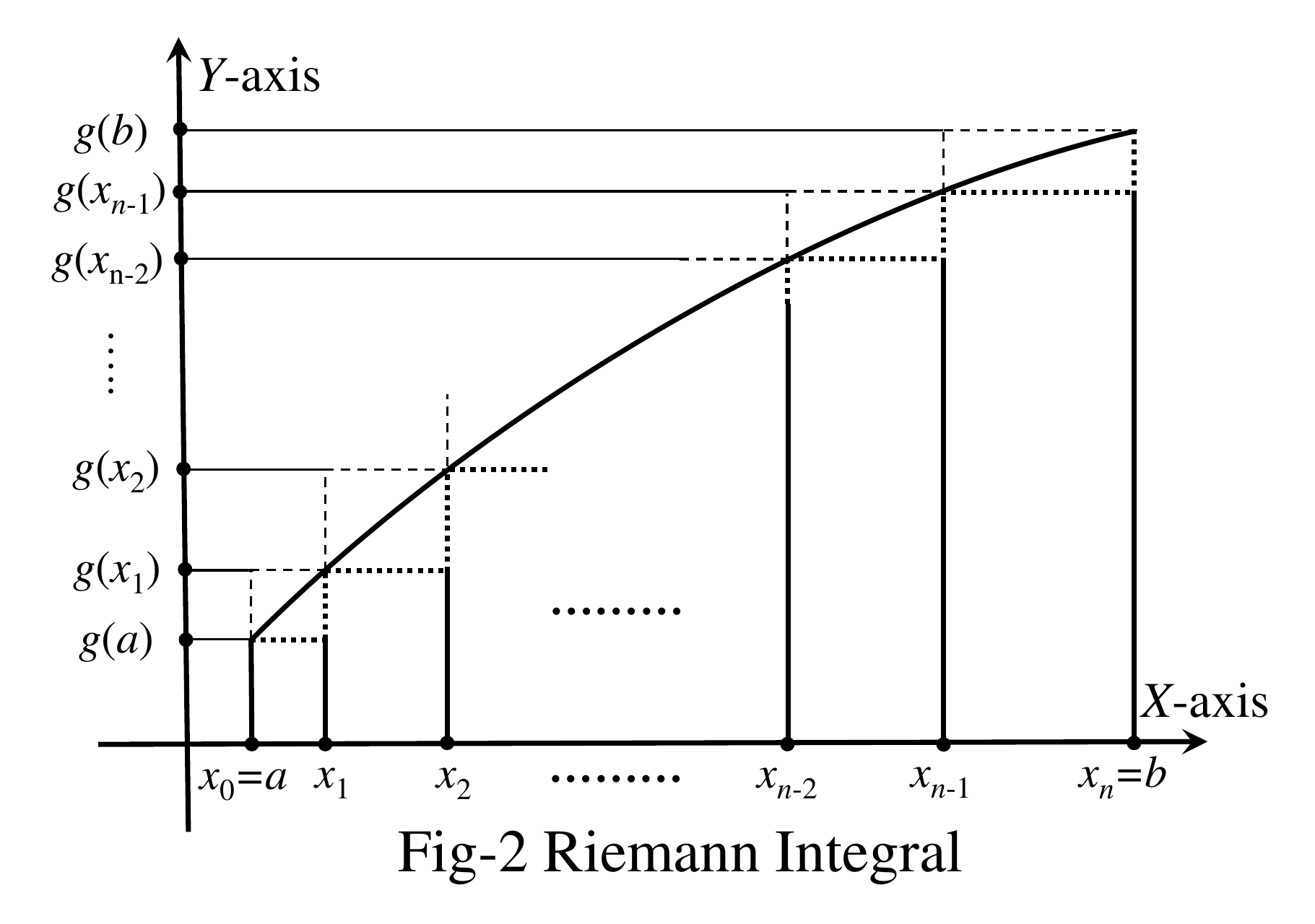}
\end{wrapfigure}

\noindent The rectification of a curve may also be achieved with a metric of proper choice, as all metrics may not lead to the rectification. Take for example the Riemann integral of a strictly increasing function $g(x)$ continuous on $[a,b]$ as shown in Fig-2. The upper and lower Riemann sum evaluated from the area of the upper and lower rectangles converges to the area under $g(x)$, because of their dependency on the partitions on $[a,b]$. Now to find the arc-length of $g(x)$ when we use the Taxicab metric and add the perimeters of lower and upper rectangles separately along $g(x)$, both sums always gives the constant $|b-a|+|g(b)-g(a)|$ irrespective of the partitions on $[a,b]$ and hence never converges to the arc-length of $g(x)$.

\begin{wrapfigure}{h}{0.50\textwidth}
	\centering	
	\includegraphics[width=0.48\textwidth, trim={6mm 8mm 0 4mm}]{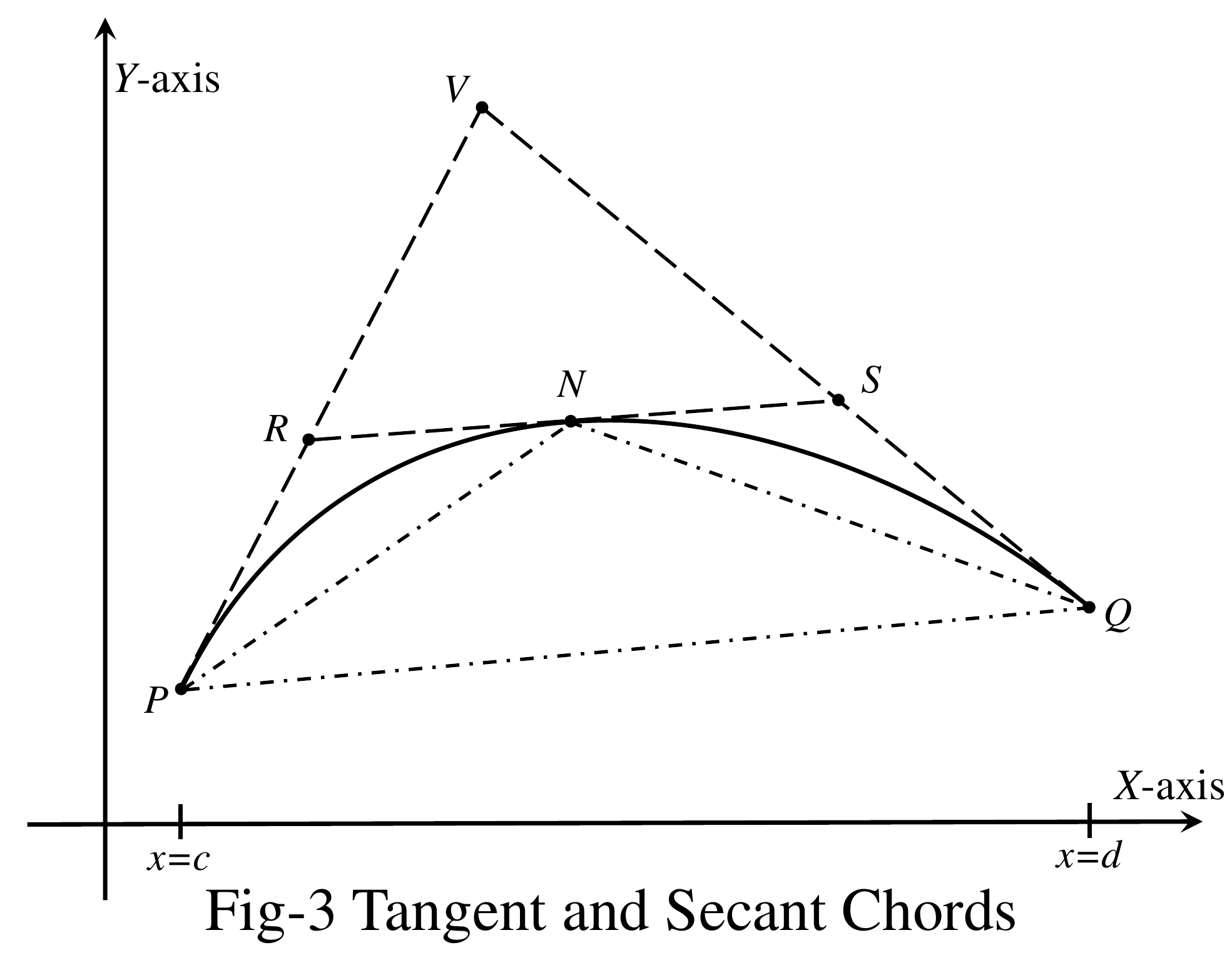}
\end{wrapfigure}

\noindent Inspired by the circle (Fig-1) where we used the Euclidean metric along with the inscribed and circumscribed polygons, we show that for $f(x)$ the use of Euclidean metric along with a similar construction leads to the rectification. Unlike the Taxicab metric which always give constant sequences, we find that the use of Euclidean metric leads to strictly increasing and decreasing sequences which converges to the arc-length of $f(x)$. We call the section of $f(x)$ in a closed interval which is either convex or concave as `\ti{cavex segment}'. Let the straight line segment from the point of contact of a tangent on a cavex segment to the intersection point of the tangent to its adjacent tangent be termed tangent-chord and to differentiate we will term the `chord' as secant-chord. Refer to Fig-3, for cavex segment $\hbox{Arc}(PQ)$ while $\overline{PQ}$, $\overline{PN}$, $\overline{NQ}$ are secant-chords, $\overline{PV}$, $\overline{VQ}$, $\overline{PR}$, $\overline{RN}$, $\overline{NS}$, $\overline{SQ}$ are tangent-chords. Let $I=[c,d]$ be the domain of the cavex segment of $f(x)$. Let $\Omega_k=\{c=x_0<x_1<...<x_{k-1}<x_k=d \}$, $k \geq 1$ be a partition of $I$. For $j=0, \cdots ,k$, let $y_j=f(x_j)$. We call the partition $\Omega_k+p$ of $I$ where $p \in (I \setminus \Omega_k)$ as a single point refinement of $\Omega_k$. For $j=1, \cdots ,k$, draw tangents to $f(x)$ at $(x_{j-1},y_{j-1})$ and $(x_j,y_j)$ such that they intersect and let $(X_j,~Y_j)$ be their coordinates. Let $s_j(\Omega_k)=\sqrt{(x_j-x_{j-1})^2+(y_j-y_{j-1})^2}$ and $t_j(\Omega_k)=\sqrt{(X_j-x_{j-1})^2+(Y_j-y_{j-1})^2}+\sqrt{(x_j-X_j)^2+(y_j-Y_j)^2}$. Then we define,
\begin{equation}
S(\Omega_k)=\underset{j=1}{\overset{k}{\sum}} s_j \hbox{~~and~~} T(\Omega_k)=\underset{j=1}{\overset{k}{\sum}} t_j\label{stm}
\end{equation}
as the secant-measure and tangent-measure of the arc-length of $f(x)$ on $I$ for the partition $\Omega_k$ respectively.

\begin{lem} \label{L2} Let $\Omega$ be any partition of a closed interval $I$ of a cavex segment, then the secant-measure $S(\Omega)$ is less than the tangent-measure $T(\Omega)$. If $\Omega_1 \subset \Omega_2 \cdots \subset \Omega_{n-1} \subset \Omega_n \cdots$ are infinite partitions on $I$, then sequences $\{ S(\Omega_n) \}$ and $\{ T(\Omega_n) \}$ are strictly increasing and decreasing respectively.
\end{lem}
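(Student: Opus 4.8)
The plan is to reduce both assertions to statements about a single subinterval, and then to a single-point refinement, after which everything follows from the triangle inequality applied to judiciously chosen triangles. Throughout, for a subinterval $[x_{j-1},x_j]$ I write $A=(x_{j-1},y_{j-1})$ and $B=(x_j,y_j)$ for the endpoints on the curve, and $V=(X_j,Y_j)$ for the meeting point of the tangents at $A$ and $B$, so that $s_j=|AB|$ and $t_j=|AV|+|VB|$. The cavex (strictly convex or concave) hypothesis guarantees that the tangents at the distinct points $A$ and $B$ have distinct slopes, hence meet in a unique point $V$, and that the arc lies strictly to one side of each tangent; in particular $A,~V,~B$ are not collinear, so the triangle $AVB$ is nondegenerate.

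For the first claim I would argue termwise. In the nondegenerate triangle $AVB$ the triangle inequality gives $|AB|<|AV|+|VB|$, that is $s_j<t_j$ for every $j$. Summing over $j=1,\dots,k$ yields $S(\Omega)=\sum_j s_j<\sum_j t_j=T(\Omega)$, the desired strict inequality for an arbitrary partition $\Omega$.

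For the monotonicity I would first note that any refinement in the nested chain is obtained by successively inserting single points, so it suffices to prove that a single-point refinement strictly increases $S$ and strictly decreases $T$; the general statement then follows by induction on the number of inserted points, since each $\Omega_{n+1}\supsetneq\Omega_n$ adds at least one point. Suppose a point $p$ with $x_{j-1}<p<x_j$ is inserted, and set $P=(p,f(p))$. For the secant-measure only the $j$-th term changes: the chord $\overline{AB}$ is replaced by $\overline{AP}$ and $\overline{PB}$, and since $P$ lies on the cavex arc it is strictly off the chord $\overline{AB}$, so the triangle inequality in $APB$ gives $|AB|<|AP|+|PB|$; hence $S$ strictly increases. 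For the tangent-measure, let $V_1$ be the intersection of the tangent at $A$ with the tangent at $P$, and $V_2$ the intersection of the tangent at $P$ with the tangent at $B$. The geometry of the cavex arc places $V_1$ on the segment $\overline{AV}$ and $V_2$ on the segment $\overline{VB}$, while $V_1,~P,~V_2$ are collinear (all lie on the tangent at $P$) with $P$ between $V_1$ and $V_2$. Using $|AV|=|AV_1|+|V_1V|$ and $|VB|=|VV_2|+|V_2B|$, the change in the $j$-th contribution reduces, after cancelling $|AV_1|$ and $|V_2B|$, to comparing $|V_1P|+|PV_2|=|V_1V_2|$ with $|V_1V|+|VV_2|$; the triangle inequality in the nondegenerate triangle $V_1VV_2$ gives $|V_1V_2|<|V_1V|+|VV_2|$, so the tangent-measure strictly decreases.

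The main obstacle is the justification of the betweenness and collinearity facts used in the tangent step, namely that $V_1\in\overline{AV}$, that $V_2\in\overline{VB}$, and that $P$ lies between $V_1$ and $V_2$ on the tangent at $P$. These are the only places where the geometry of the cavex segment, rather than a bare triangle inequality, is genuinely used: they follow from the arc lying strictly on one side of each of its tangents, so that as the point of tangency moves monotonically from $A$ to $B$ the tangent line sweeps across the outer vertex $V$ and cuts the two sides $\overline{AV}$ and $\overline{VB}$ in their interiors. Once these incidence facts are established, the two strict triangle inequalities above finish the argument, and the passage from arbitrary nested refinements to single-point insertions is a routine induction.
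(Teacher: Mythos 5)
Your proposal is correct and follows essentially the same route as the paper's own proof: a termwise triangle inequality for $S(\Omega)<T(\Omega)$, reduction of refinements to single-point insertions, and then two strict triangle inequalities (one in the triangle formed by the chord and the new point on the arc, one in the small triangle cut off by the new tangent) for the monotonicity. If anything, you are more careful than the paper, which asserts the incidence facts (that the new tangent meets the two old tangent-chords in their interiors and that the new point lies inside the outer triangle) directly from its figure, whereas you explicitly flag them as the step requiring the cavexity hypothesis.
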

\begin{proof}
For any partition $\Omega$ since $(X_j,~Y_j)$ does not lie on the secant-chord, from triangle inequality we get $S(\Omega) < T(\Omega)$. Now any partition $\Omega_j$ can be achieved from $\Omega_{j-1}$ through a finite sequence of single point refinements. Refer to Fig-3 in $\hbox{Arc}(PQ)$, $P$ and $Q$ are adjacent points. The tangent-chords $\overline{PV}$ and $\overline{VQ}$ along with the secant-chord $\overline{PQ}$ forms $\triangle PVQ$. Then length of $\overline{PQ}$ will be the secant-measure and length of $\overline{PV}+\overline{VQ}$ will be the tangent-measure for $\hbox{Arc}(PQ)$. Any single point refinement will give a point, say $N$, between points $P$ and $Q$ on $\hbox{Arc}(PQ)$, hence will lie in the interior of $\triangle PVQ$. A tangent at $N$ will intersect $\overline{PV}$ and $\overline{VQ}$ at $R$ and $S$ respectively. Now the secant-measure will be the sum of lengths of two secant-chords $\overline{PN}$ and $\overline{NQ}$ and the tangent-measure will be sum of lengths of four tangent-chords $\overline{PR}$, $\overline{RN}$, $\overline{NS}$ and $\overline{SQ}$. Then it follows from triangle inequality, the secant-measure will increase and the tangent-measure will decrease. So $S(\Omega_{n-1}) < S(\Omega_n)$ and $\{ S(\Omega_n) \}$ will be a strictly increasing sequence. Similarly $T(\Omega_{n-1}) > T(\Omega_n)$ and $\{ T(\Omega_n) \}$ will be a strictly decreasing sequence.
\end{proof}
\begin{lem} \label{L3} As two points on a cavex segment approach each other, the section of the cavex segment between the two points tends to a straight line.
\end{lem}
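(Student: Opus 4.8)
The plan is to reduce the statement to a quantitative flatness estimate for the triangle $\triangle PVQ$ that already governs the construction of Lemma \ref{L2}. Without loss of generality I would take the cavex segment to be convex; the concave case follows by reflecting $f$ in a horizontal line, which preserves tangents, secant-chords, tangent-chords and all distances. Let the two points be $P=(x_1,f(x_1))$ and $Q=(x_2,f(x_2))$ with $x_1<x_2$ in $I=[c,d]$, let $V$ be the intersection of the tangents at $P$ and $Q$, and recall from the construction that the arc between $P$ and $Q$ lies inside $\triangle PVQ$: convexity places the secant-chord $\overline{PQ}$ on one side of the arc and the two tangent-chords $\overline{PV},~\overline{VQ}$ on the other. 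The phrase ``tends to a straight line'' I would make precise, consistently with its later use, as the statement that the greatest perpendicular deviation of points of the arc from $\overline{PQ}$, measured relative to the length of $\overline{PQ}$, tends to $0$ as $x_2-x_1 \ra 0$; equivalently the apex $V$ collapses onto the base.

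First I would introduce the base angles $\alpha$ and $\beta$ that the tangent-chords $\overline{PV}$ and $\overline{VQ}$ make with the secant-chord $\overline{PQ}$, so that the apex angle of $\triangle PVQ$ is $\pi-\alpha-\beta$. A point of the arc lies inside $\triangle PVQ$, hence its perpendicular distance to the line $PQ$ is at most the height $h$ of the triangle on base $\overline{PQ}$; by the law of sines
\begin{equation}
\frac{h}{|\overline{PQ}|}=\frac{\sin\alpha\,\sin\beta}{\sin(\alpha+\beta)},\qquad
\frac{|\overline{PV}|+|\overline{VQ}|}{|\overline{PQ}|}=\frac{\sin\alpha+\sin\beta}{\sin(\alpha+\beta)}.
\label{eq:flat}
\end{equation}
It then suffices to show that the base angles tend to $0$, for the first ratio in \eqref{eq:flat} would tend to $0$ (the arc is squeezed onto $\overline{PQ}$) while the second tends to $1$ (the two tangent-chords collapse onto the secant-chord).

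The heart of the argument is the convergence of these angles. The secant-chord has slope $m_c=(f(x_2)-f(x_1))/(x_2-x_1)$, while the tangents at $P$ and $Q$ have slopes $f'(x_1)$ and $f'(x_2)$, and $\alpha,\beta$ are the angles these lines make with the secant. By the very definition of the derivative at $x_1$, $m_c \ra f'(x_1)$ as $x_2 \ra x_1$, which already forces $\alpha \ra 0$ and hence drives the height ratio in \eqref{eq:flat} to $0$. For the fully symmetric conclusion I would also use that on a convex segment $f'$ is non-decreasing, so $f'(x_1)\le m_c \le f'(x_2)$, together with the fact that a derivative existing at every point of a convex segment is continuous there (it coincides with the right-hand derivative, which is right-continuous, and with the left-hand derivative, which is left-continuous, so it is both). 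This gives $f'(x_2)-f'(x_1)\ra 0$, hence $\beta\ra 0$ as well, and both ratios in \eqref{eq:flat} behave as claimed.

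The step I expect to be the main obstacle is precisely this passage from the bare hypothesis ``tangent exists'' (Definition 1) to continuity of the tangent direction along the segment; convexity is what rescues it, since monotone one-sided derivatives that agree everywhere are automatically continuous. A secondary point requiring care is the justification that the arc lies in the interior of $\triangle PVQ$, so that $h$ is a genuine upper bound for the deviation; this is exactly the enclosing property already exploited in Lemma \ref{L2} and is again a consequence of convexity, so beyond invoking the construction no new work is needed there.
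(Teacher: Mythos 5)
Your proposal is correct, but it takes a genuinely different route from the paper's. The paper fixes one endpoint $P$, takes a sequence of positions $R_i$ approaching $P$, and uses convexity only ordinally: the angles $\alpha_i$, $\beta_i$ that the tangent at $R_i$ and the secant through $P$ and $R_i$ make with the $X$-axis form increasing sequences bounded by the tangent angle $\theta$ at $P$, whence both lines are said to tend to the tangent line at $P$; ``tends to a straight line'' is left at that qualitative, directional level, and the key claim that $\theta$ is actually the supremum (i.e.\ that the tangent direction varies continuously up to $P$) is asserted rather than proved. You instead make the conclusion metric --- the height of the enclosing triangle $\triangle PVQ$ relative to the base $|\overline{PQ}|$ tends to $0$ and the ratio of tangent-measure to secant-measure tends to $1$, both read off from the law of sines --- and you reduce everything to the vanishing of the two base angles, which you obtain from the definition of the derivative together with the standard convexity fact that agreeing one-sided derivatives of a convex function are automatically continuous. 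Your version buys three things: it supplies exactly the quantitative statement that Theorem \ref{T2} later consumes (the differences $|t_j-s_j|$, etc., tending to zero), it covers the case where both points move rather than one being held fixed, and it converts the paper's unproved ``SUP $=\theta$'' step into an explicit and correctly justified lemma, which you rightly identify as the crux. The paper's version buys brevity and stays within the monotone-bounded-sequence style used throughout (Lemmas \ref{L1} and \ref{L2}). One blemish you share with the paper: both arguments work with slopes ($f'$, $\tan\theta$) and so tacitly exclude vertical tangents, which Definition 1 does not rule out; your law-of-sines ratios are stated purely in terms of angles and survive there, but the slope-based convergence argument would need to be rephrased in angular terms to cover that case.
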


\begin{wrapfigure}{h}{0.55\textwidth}
	\centering	
	\includegraphics[width=0.53\textwidth, trim={6mm 8mm 4mm 10mm}]{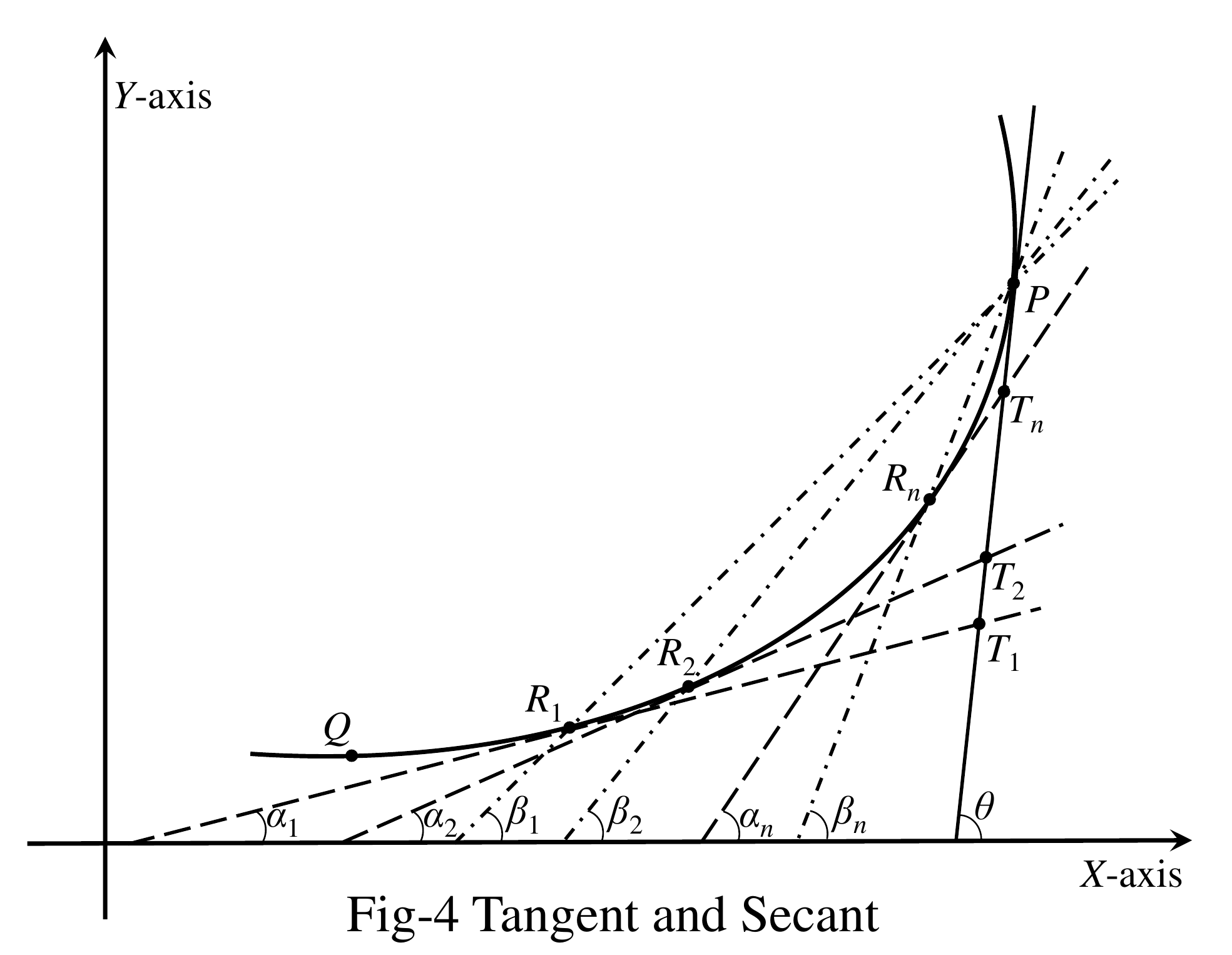}
\end{wrapfigure}

\noindent \ti{Proof.} There is no loss of generality in considering the cavex segment $\hbox{Arc}(PQ)$ in Fig-4. Let the tangent at $P$ make an angle $\theta$ with the $X-$axis. Let $R$ be a point on $\hbox{Arc}(PQ)$ and $R_1,R_2, \ldots , R_n, \ldots$ represent different positions of $R$ such that for $i \in \N, R_{i+1}$ is nearer to $P$ than $R_i$. Let the tangent at $R_i$ and the secant through the points $P$ and $R_i$ make angle $\alpha_i$ and $\beta_i$ with the $X-$axis respectively. Let $T_i$ represent the point of intersection of the tangent at $R_i$ with the tangent at $P$. If the graph of $\hbox{Arc}(PR_i)$ is not a straight line then points $R_i,~T_i,~P$ are not collinear. The secant through $P$ and $R_i$ and the tangents at $P$ and $R_i$ all will coincide if the graph of $\hbox{Arc}(PR_i)$ is a straight line. This shows that the secant through $P$ and $R_i$ and the tangents at $P$ and $R_i$ all coincides if and only if the graph of $\hbox{Arc}(PR_i)$ is straight line. Now we see for any position of $R$ on $\hbox{Arc}(PQ)$ the tangent and the secant can never make an angle larger than $\theta$. Then as point $R$ tend towards $P$ we have,
\begin{align*}
&~~~~~~~~~~~~\alpha_1 < \alpha_2 < \cdots < \alpha_n < \cdots < \theta > \cdots > \beta_n > \cdots > \beta_2 > \beta_1 \RA \\  
&\tan \alpha_1 < \tan \alpha_2 < \cdots < \tan \alpha_n < \cdots < \tan \theta > \cdots \tan \beta_n > \cdots > \tan \beta_2 > \tan \beta_1.
\end{align*}
This implies that $\{ \alpha_n \}$ and $\{ \beta_n \}$ are increasing sequences with $\theta$ as the SUP and $\{ \tan \alpha_n \}$ and $\{ \tan \beta_n \}$ are increasing sequences with $\tan \theta$ as the SUP. Then as $n \ra \infty$ the secant-line represented by $\{ \alpha_n \}$ and the tangent-line represented by $\{ \beta_n \}$ tends to the tangent-line at $P$ implying $\hbox{Arc}(PR_n)$ tends to a straight line segment.  \hspace*{\fill} $\square$
\begin{thrm} \label{T2}
	For a cavex segment, the \emph{LUB} of the secant-measures, the \emph{GLB} of the tangent-measures and the arc-length of the cavex segment are all equal.
\end{thrm}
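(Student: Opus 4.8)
The plan is to establish the two extrema, collapse the gap between them with Lemma \ref{L3}, and only then identify the common value with the arc-length.

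First I would record existence and ordering. By Lemma \ref{L2}, along any nested refining sequence $\Omega_1 \subset \Omega_2 \subset \cdots$ the secant-measures $\{S(\Omega_n)\}$ increase and the tangent-measures $\{T(\Omega_n)\}$ decrease, while $S(\Omega) < T(\Omega)$ for every partition $\Omega$. Taking a common refinement of two arbitrary partitions then shows that every secant-measure lies below every tangent-measure, so $\{S(\Omega)\}$ is bounded above (by any fixed $T(\Omega_0)$) and $\{T(\Omega)\}$ is bounded below (by any $S(\Omega_0)>0$). Completeness of $\R$ then yields $S^\ast=\mathrm{LUB}(\{S(\Omega)\})$ and $T^\ast=\mathrm{GLB}(\{T(\Omega)\})$ with $S^\ast \le T^\ast$; in particular the cavex segment is rectifiable.

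The heart of the argument is to prove $S^\ast=T^\ast$, equivalently $T(\Omega_n)-S(\Omega_n)\ra 0$ along a sequence whose mesh tends to zero. On the $j$-th subinterval the secant-chord and the two tangent-chords bound a triangle like $\triangle PVQ$ of Fig-3, and $t_j-s_j$ is the excess of its two tangent-sides over its base. By Lemma \ref{L3}, as the endpoints of a subinterval approach each other the sub-arc straightens and the two tangent directions and the secant direction coincide in the limit, so $t_j-s_j \ra 0$ for each fixed piece. The main obstacle is to pass from this termwise statement to the whole sum $T(\Omega_n)-S(\Omega_n)=\sum_j (t_j-s_j)$: I would bound $t_j-s_j$ by the small angles the two tangent-chords make with their secant-chord — by the law of sines this excess equals $s_j$ times a quantity vanishing to second order in those angles — and then exploit convexity. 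Since the tangent angle is monotone along a cavex segment (exactly as used in Lemma \ref{L3}), these angles telescope to a finite total turning; as the mesh shrinks the largest per-piece angle tends to zero, and finiteness of the total turning prevents $\sum_j (t_j-s_j)$ from accumulating. Hence $T(\Omega_n)-S(\Omega_n)\ra 0$, and since $S(\Omega_n)\le S^\ast \le T^\ast \le T(\Omega_n)$ this forces $S^\ast=T^\ast$.

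Finally I would identify this common value with the arc-length. Each secant-chord realizes the straight-line distance between its endpoints, which is the shortest route joining them; hence each $s_j$ underestimates the distance measured along the corresponding sub-arc, so $S(\Omega)\le(\text{arc-length})$ for every $\Omega$ and therefore $(\text{arc-length})\ge S^\ast$. For the reverse, the arc-length is by Definition 2 the distance along the curve, and Lemma \ref{L3} certifies that as the mesh shrinks the inscribed polygon's edges straighten onto the curve without losing length, so $S(\Omega_n)$ exhausts that distance and $(\text{arc-length})=S^\ast$. Combined with $S^\ast=T^\ast$, the least upper bound of the secant-measures, the greatest lower bound of the tangent-measures, and the arc-length of the cavex segment are all equal. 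As a by-product $(\text{arc-length})=T^\ast\le T(\Omega)$ for every $\Omega$, i.e. a tangent path never falls below the arc it encloses — the convexity statement to be recorded later as Archimedes' axioms.
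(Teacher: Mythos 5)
Your decomposition is genuinely different from the paper's. The paper never isolates $S^\ast=T^\ast$ as a separate claim: it assumes the arc-length $\LL$ exists, uses its additivity $\sum_j l_j=\LL$ over the pieces of $\Omega_n$, and squeezes everything at once, arguing from Lemma \ref{L3} that the per-piece differences $|s_j-l_j|$, $|t_j-l_j|$, $|t_j-s_j|$ all tend to zero as the pieces straighten, and concluding LUB $=$ GLB $=\LL$ in one stroke. You instead prove $S^\ast=T^\ast$ first by a quantitative argument: the law of sines exhibits $t_j-s_j$ as $s_j$ times a quantity of second order in the tangent--secant angles, and the finite total turning of a cavex segment (plus boundedness of $\sum_j s_j$) keeps the sum from accumulating. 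This is the strongest part of your proposal, and it is stronger than the paper at exactly the point where the paper is weakest: the paper passes from per-piece convergence to convergence of a sum whose number of terms grows with $n$ without comment, while your second-order estimate is precisely what licenses that passage (granted that the per-piece angles shrink uniformly with the mesh, which does hold here because the monotone slope of a cavex segment is continuous, hence uniformly continuous, on a closed interval).

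The weak link is your final identification of the common value with the arc-length, and the weakness is real. The direction $S^\ast\le\LL$ invokes ``the chord is the shortest route between its endpoints,'' which in the paper's logical order is Corollary 4 --- a consequence of Theorem \ref{T2} --- so inside this development it is circular, or at best an appeal to exactly the kind of axiom the paper is trying to eliminate. The reverse direction $\LL\le S^\ast$ is carried entirely by the sentence that the inscribed edges ``straighten onto the curve without losing length, so $S(\Omega_n)$ exhausts that distance'': that asserts the conclusion rather than proving it, and it is the same termwise-to-sum difficulty you were careful about for $T-S$, now for $\sum_j|l_j-s_j|$, where no estimate is available since $l_j$ is not a computable geometric quantity. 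The repair is to fold your quantitative bound back into the paper's framework: assume, as the paper does, that $\LL$ exists and is additive over the pieces, grant the per-piece sandwich $s_j\le l_j\le t_j$, so that $|\LL-S(\Omega_n)|\le\sum_j(t_j-s_j)$, and your own estimate then finishes the proof. Alternatively, note that under the standard analytic definition of arc-length as the LUB of inscribed polygonal lengths, $\LL=S^\ast$ holds by definition, and your middle step alone is then the entire content of the theorem.
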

\begin{proof}
Let $f(x)$ be the cavex segment on $[a,b]$ and $f'(x)$ the slope at $x$. Assume that the arc length of $f(x)$ exists and let it be $\LL$. For $[a,b]$ create infinite partitions $\Omega_1 \subset \Omega_2 \cdots \subset \Omega_{n-1} \subset \Omega_n \cdots$ with their corresponding number of secant and tangent measures. From Lemma \ref{L2} for any partition $\Omega$, $S(\Omega) < T(\Omega)$. Then for any $r,s \in \N$, $S(\Omega_r)<T(\Omega_s)$, because for $r>s$, $S(\Omega_r)<T(\Omega_r)<T(\Omega_s)$ and for $r<s$, $S(\Omega_r)<S(\Omega_s)<T(\Omega_s)$. As $\{ S(\Omega_n) \}$ is bounded above by any of $T(\Omega)$'s and $\{ T(\Omega_n) \}$ is bounded below by any of $S(\Omega)$'s, LUB$(\{ S(\Omega_n) \})$ and $\hbox{GLB}(\{ T(\Omega_n) \})$ exists, then $\hbox{LUB}(\{ S(\Omega_n) \}) \leq \hbox{GLB}(\{ T(\Omega_n) \})$. For the partition $\Omega_n$ for $j=1, \ldots, n$, let $\hat{f}_j$ represent the segment of $f(x)$ on $[x_{j-1}, x_j]$ and $l_i$ be the arc-length of $\hat{f}_j$, then $\sum^n_{j=1} l_i=\LL$. Let for $j=0, \ldots, n$, $f(x_j)=y_j$. If the graph of $f(x)$ is a straight line on $I$ with length $\mathcal F$, then all $\hat{f}_j$'s are straight lines. The tangent chords to $f(x)$ at $(x_{j-1}, y_{j-1})$ and $(x_j, y_j)$ coincide with $\hat{f}_j$. Then any point on $\hat{f}_j$ can be taken as $(X_j,Y_j)$ and from \eqref{stm} we get $\F=S(\Omega_n)= T(\Omega_n) \RA \F=\hbox{LUB}(\{ S(\Omega_n) \})= \hbox{GLB}(\{ T(\Omega_n) \})$. Again from \eqref{stm} $t_j$ will be equal to $s_j$ only if $(X_j, Y_j)$ lies on the line joining $(x_{j-1}, y_{j-1})$ and $(x_j, y_j)$. This implies that the differences $|s_j-l_j|$, $|t_j-l_j|$, $|t_j-s_j|$ are identically zero if and only if $\hat{f}_j$ is a straight line, then a non-zero value of the differences will imply $\hat{f}_j$ is not a straight line. In the cavex segment for $j=1, \ldots, n$, we have $|s_j-l_j|$, $|t_j-l_j|$, $|t_j-s_j| \geq 0$. As $n \ra \infty$, $x_{j-1} \ra x_j$ and $y_{j-1} \ra y_j$ then from Lemma \ref{L3} each $\hat{f}_j$ tend to a straight line and hence each of the differences $|s_j-l_j|$, $|t_j-l_j|$, $|t_j-s_j|$ will tend to zero implying $\hbox{LUB}(\{S_n \}) = \hbox{GLB}(\{T_n \})=\LL$.	
\end{proof}
\noindent E.W. Hobson\cite{EW} shows the increasing sequence of perimeter of inscribed polygons is bounded above, L. Gillman\cite{LG} just defines $\C$ as the LUB of the perimeters of the inscribed polygons. Construction of a non-decreasing sequence which is bounded above just confirms the existence of a LUB and nothing else. In Theorem \ref{T2} it is the GLB of the tangent-measures which helps prove that the LUB is $\LL$. This shows that Euclidean metric is sufficient for the rectification of $f(x)$. In Theorem 2 the equality could also be achieved in the following way. Now suppose $\hbox{LUB}(\{S_n \}) = \hbox{GLB}(\{T_n \}) \neq \LL$ then there exists at least one point on a tangent-chord or/and a secant-chord between two adjacent points on $f(x)$ not lying on $f(x)$. Then even a single point refinement will increase $\hbox{LUB}(\{ S(\Omega_n) \})$ or/and decrease $\hbox{GLB}(\{ T(\Omega_n) \}) \RA \hbox{LUB}(\{ S(\Omega_n) \}) > \hbox{GLB}(\{ T(\Omega_n) \})$ a contradiction. So $\LL=\hbox{LUB}(\{ S(\Omega_n) \})= \hbox{GLB}(\{ T(\Omega_n) \})$.
\begin{cor} \label{C2} 
	The arc-length of a Cavex segment is rectifiable and it lies between the secant-measure and tangent-measure.
\end{cor}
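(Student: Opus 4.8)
The plan is to read off both assertions from Lemma~\ref{L2} and Theorem~\ref{T2}, the only genuinely new work being the uniform boundedness required by Definition~3. Write $I=[c,d]$ for the domain of the cavex segment and let $\Omega_0=\{c,d\}$ be the trivial partition, whose tangent-measure $T(\Omega_0)$ is the sum of the two tangent-chords meeting at the intersection of the tangents at the endpoints (this intersection exists precisely because the segment is cavex). First I would observe that every partition $\Omega$ of $I$ is obtained from $\Omega_0$ by a finite sequence of single-point refinements, exactly the operation analysed in the proof of Lemma~\ref{L2}. Since each single-point refinement strictly decreases the tangent-measure, one gets $T(\Omega)\le T(\Omega_0)$ for every $\Omega$, with equality only for $\Omega=\Omega_0$.

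Combining this with the inequality $S(\Omega)<T(\Omega)$ from Lemma~\ref{L2} yields $\Gamma(\Omega)=S(\Omega)<T(\Omega)\le T(\Omega_0)$ for all partitions $\Omega$. Hence $M=T(\Omega_0)$ is a positive number bounding every $\Gamma(\Omega)$, so by Definition~3 the cavex segment is rectifiable on $I$; in particular the set of secant-measures is bounded above, $\hbox{LUB}(\{S(\Omega)\})$ is finite, and the arc-length $\LL$ is a well-defined finite quantity.

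For the second assertion, given any partition $\Omega$ of $I$ I would embed it as the first term of a refining chain $\Omega=\Omega_1\subset\Omega_2\subset\cdots$ and apply Theorem~\ref{T2}, which gives $\hbox{LUB}(\{S(\Omega_n)\})=\hbox{GLB}(\{T(\Omega_n)\})=\LL$. Because $\{S(\Omega_n)\}$ is increasing and $\{T(\Omega_n)\}$ decreasing by Lemma~\ref{L2}, the first terms satisfy $S(\Omega)=S(\Omega_1)\le \LL$ and $T(\Omega)=T(\Omega_1)\ge \LL$; hence $S(\Omega)\le \LL\le T(\Omega)$, i.e. the arc-length lies between the secant- and tangent-measures, the inequalities being strict unless the segment is a straight line.

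I expect the only delicate point to be the passage in the first paragraph from the chain-version of Lemma~\ref{L2} (stated for a single nested sequence $\Omega_1\subset\Omega_2\subset\cdots$) to a bound uniform over all partitions. The resolution is that the real content of Lemma~\ref{L2} is the strict monotonicity of $T$ under one single-point refinement, and every finite partition reaches $\Omega_0$ by finitely many such refinements; so no genuinely new estimate is needed, only the reinterpretation of the Lemma's construction as applying to an arbitrary $\Omega$ rather than to a pre-chosen chain.
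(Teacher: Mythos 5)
Your proof is correct, but it takes a genuinely different and more thorough route than the paper's, whose entire proof is a one-line appeal to Theorem \ref{T2}: the paper simply notes that $\hbox{LUB}(\{S(\Omega_n)\})=\hbox{GLB}(\{T(\Omega_n)\})=\LL$ implies rectifiability, and records $S(\Omega_n)<\LL<T(\Omega_n)$ for all $n$. What you do differently is to verify Definition 3 honestly: you exhibit one bound $M=T(\Omega_0)$ valid for \emph{every} partition $\Omega$, by observing that any $\Omega$ is reached from the trivial partition $\{c,d\}$ by finitely many single-point refinements, each of which (by the argument inside Lemma \ref{L2}) decreases the tangent-measure, whence $\Gamma(\Omega)=S(\Omega)<T(\Omega)\le T(\Omega_0)$. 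This buys two things the paper's proof lacks. First, Definition 3 quantifies over all possible partitions, while Theorem \ref{T2} only controls the members of a single nested chain; your reduction closes that gap. Second, the paper's proof of Theorem \ref{T2} opens by \emph{assuming} the arc-length exists, so deducing rectifiability from that theorem is uncomfortably close to circular; your first paragraph obtains the uniform bound without ever invoking $\LL$, and Theorem \ref{T2} is then used only for the sandwich $S(\Omega)\le\LL\le T(\Omega)$. One caveat on your second paragraph: when you embed an arbitrary $\Omega$ as the first term of a refining chain and invoke Theorem \ref{T2}, you should require the mesh of the chain to tend to zero, since the theorem's proof needs $x_{j-1}\ra x_j$ (via Lemma \ref{L3}) to identify the common value with $\LL$; such a chain always exists, so the step is harmless once stated explicitly. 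Your remark that the inequalities are strict except for a straight segment is consistent with (indeed slightly more careful than) the paper's blanket claim of strictness.
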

\begin{proof} Let $\LL$ be the arc-length. From Theorem \ref{T2} for any cavex segment $\hbox{LUB}(\{ S(\Omega_n) \})= \hbox{GLB}(\{ T(\Omega_n) \})=\LL$ implying the arc-length of a Cavex segment is rectifiable, also we have $S(\Omega_n) < \LL < T(\Omega_n)~\fa~n$.
\end{proof}
\begin{cor} \label{C3}
	For a cavex segment the LUB of the secant-measures is the arc-length.
\end{cor}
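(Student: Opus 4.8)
The plan is to obtain this statement as an immediate specialization of Theorem \ref{T2}, which has already done all the real work. Theorem \ref{T2} establishes the full chain of equalities $\hbox{LUB}(\{ S(\Omega_n) \}) = \hbox{GLB}(\{ T(\Omega_n) \}) = \LL$, where $\LL$ denotes the arc-length of the cavex segment. First I would recall this chain, then simply retain its first and last members, discarding the middle term involving the tangent-measures. This yields $\hbox{LUB}(\{ S(\Omega_n) \}) = \LL$, which is precisely the assertion of the corollary. No further estimate is needed, so the proof reduces to a one-line deduction.

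The reason this deserves a separate statement, rather than being absorbed silently into Theorem \ref{T2}, is expository: it isolates the half of the result that coincides with the classical inscribed-polygon prescription. As noted in the discussion preceding Corollary \ref{C2}, defining arc-length (or the circumference $\C$) merely as the LUB of the secant-measures---equivalently the perimeters of inscribed polygons---only guarantees that such a supremum exists; by itself it does not certify that this supremum equals the arc-length. The content of the corollary is that, once Theorem \ref{T2} has supplied the matching GLB of the tangent-measures as an upper bound that descends to the same value, the classical supremum is now fully justified as equalling $\LL$.

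Consequently there is no genuine obstacle to confront here; the difficulty was already met and resolved inside Theorem \ref{T2}, specifically in showing (via the monotonicity of Lemma \ref{L2} together with Lemma \ref{L3}) that the secant- and tangent-measures close up on the common value $\LL$. The only point to keep in mind is that the hypothesis of a cavex segment is exactly what licenses the appeal to Theorem \ref{T2}; under that hypothesis the extraction of $\hbox{LUB}(\{ S(\Omega_n) \}) = \LL$ is automatic.
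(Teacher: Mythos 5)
Your proposal is correct and matches the paper's own proof exactly: both deduce the corollary as an immediate consequence of Theorem \ref{T2}, reading off $\hbox{LUB}(\{ S(\Omega_n) \})=\LL$ from the chain of equalities established there. The additional expository remarks you make (on why the LUB alone, without the tangent-measure GLB, would not suffice) are accurate and consistent with the paper's discussion following Theorem \ref{T2}, but are not needed for the proof itself.
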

\begin{proof}
	Since the segment is cavex, by Theorem \ref{T2} it is sufficient to find the LUB of secant-measures to get the value of the arc-length.
\end{proof}
\begin{cor}
	Among a chord and a cavex segment between two points, the chord length is smaller.
\end{cor}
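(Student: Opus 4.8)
The plan is to realize the chord as the simplest possible secant-measure and then quote the strict inequality already established in Corollary~\ref{C2}. Let $P=(c,f(c))$ and $Q=(d,f(d))$ be the endpoints of the cavex segment on $I=[c,d]$, so that the chord in question is the secant-chord $\overline{PQ}$. First I would observe that $\overline{PQ}$ is exactly the secant-measure $S(\Omega_1)$ associated with the trivial partition $\Omega_1=\{c,d\}$: in this case the sum defining $S$ in \eqref{stm} collapses to the single term $s_1=\sqrt{(d-c)^2+(f(d)-f(c))^2}$, which is precisely the Euclidean length of $\overline{PQ}$.

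Next I would invoke Corollary~\ref{C2}, which states that for the arc-length $\LL$ of any cavex segment one has $S(\Omega_n)<\LL<T(\Omega_n)$ for every $n$. Specialising this to $n=1$ and the trivial partition $\Omega_1$ yields $\overline{PQ}=S(\Omega_1)<\LL$, that is, the chord length is strictly smaller than the arc-length of the cavex segment, which is the assertion. Should one wish to avoid citing the strict inequality of Corollary~\ref{C2} directly, the identical conclusion follows from Lemma~\ref{L2} together with Theorem~\ref{T2}: a single interior-point refinement of $\Omega_1$ strictly increases the secant-measure by the triangle inequality of Lemma~\ref{L2}, so $S(\Omega_1)<S(\Omega_2)$, and since Theorem~\ref{T2} identifies $\LL$ with $\mathrm{LUB}(\{S(\Omega_n)\})$, we again obtain $S(\Omega_1)<\LL$.

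I do not expect any genuine obstacle, as the statement is essentially a restatement of the $n=1$ case of the bounds proved earlier; the only point requiring care is the degenerate situation in which the segment happens to be a straight line, where chord and arc coincide and the inequality becomes an equality. This case is excluded because a straight line is not a genuinely cavex segment: the strictness in Lemma~\ref{L2} and Corollary~\ref{C2} rests on the intersection point $(X_j,Y_j)$ of adjacent tangents not lying on the secant-chord, which is exactly the non-degeneracy guaranteed by convexity or concavity. Hence the strict inequality is legitimate for every cavex segment.
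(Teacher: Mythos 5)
Your proof is correct and follows essentially the same route as the paper: you identify the chord with the secant-measure of the trivial one-interval partition and then invoke the strict inequality $S(\Omega_n) < \mathcal{L} < T(\Omega_n)$ of Corollary~\ref{C2}, which is precisely the paper's argument. Your additional remarks (the alternative route via Lemma~\ref{L2} and Theorem~\ref{T2}, and the discussion of the degenerate straight-line case) are sensible elaborations but not a different method.
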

\begin{proof}
	The secant-measure is the length of the chord. From Corollary \ref{C2} the secant-measure is strictly less than than the arc-length of a cavex segment.
\end{proof}
\noindent The above corollary justifies the proof of $\underset{x \ra 0}{\lim} \frac{\sin x}{x}=1$, that uses arc-length.
\begin{cor}
	Consider a chord and two non-intersecting concave/convex curves between two points. Then the curve that is contained inside the region bounded by the other curve and the chord is shorter.
\end{cor}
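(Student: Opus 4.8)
\emph{Proof strategy.} The plan is to reduce the statement to the secant/tangent machinery of Theorem~\ref{T2} by sandwiching the inner curve between two nested convex polygons. Without loss of generality take both curves convex and sharing the endpoints $P,Q$, with the chord $\overline{PQ}$ as their common base; call $C_1$ the curve lying inside the region $K$ bounded by the outer curve $C_2$ and $\overline{PQ}$, and $C_2$ the outer one. The region $K$ is convex, being bounded by a convex arc and its chord.

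First I would record two containments that hold for \emph{arbitrary} partitions. For any partition $\Omega$ of $C_1$, the inscribed (secant) polygon $\Pi_1(\Omega)$ has all its vertices on $C_1\subseteq K$; since $K$ is convex, the convex region $R_1$ bounded by $\Pi_1(\Omega)$ and $\overline{PQ}$ is the convex hull of points of $K$ and hence $R_1\subseteq K$. For any partition $\Omega'$ of $C_2$, each tangent line used in the tangent-measure is a support line of $C_2$, so $C_2$, and therefore all of $K$, lies on one side of it; consequently the circumscribed region $R_2$ bounded by the tangent polyline of $\Omega'$ and $\overline{PQ}$ satisfies $K\subseteq R_2$. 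Combining, $R_1\subseteq K\subseteq R_2$ for every choice of $\Omega$ and $\Omega'$, and both $R_1,R_2$ are convex polygons sharing the base $\overline{PQ}$.

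The key step is a monotonicity lemma I would prove first: if a convex polygon is contained in another convex polygon, its perimeter is no larger. This follows in the triangle-inequality spirit of the paper by ``corner cutting'': extend each edge of the inner polygon to a full line; since the inner polygon is convex and contained in the outer, each such line cuts a triangle off the outer polygon, and replacing the cut-off boundary by the edge strictly shortens the perimeter by the triangle inequality; iterating over all edges carries the outer polygon onto the inner one while only decreasing perimeter. Applying this to $R_1\subseteq R_2$ and cancelling the shared base $\overline{PQ}$ gives $S(\Omega)\le T(\Omega')$ for all partitions $\Omega$ of $C_1$ and $\Omega'$ of $C_2$.

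Finally I would pass to the limit exactly as in Theorem~\ref{T2}. Taking the LUB over $\Omega$ and then the GLB over $\Omega'$, and using that the LUB of the secant-measures of $C_1$ is its arc-length $\LL_1$ (Corollary~\ref{C3}) while the GLB of the tangent-measures of $C_2$ is its arc-length $\LL_2$ (Theorem~\ref{T2}), yields $\LL_1\le\LL_2$. Strictness comes from the hypothesis that the curves do not intersect: $C_1$ lies strictly inside $K$ away from $P,Q$, so the inclusion $R_1\subseteq R_2$ is strict for fine enough partitions and the corner-cutting step is then strict, giving $\LL_1<\LL_2$. I expect the main obstacle to be the monotonicity lemma for nested convex polygons --- in particular making the corner-cutting argument rigorous and uniform enough to deliver the strict inequality --- since everything else is a direct reuse of the secant/tangent framework already in place.
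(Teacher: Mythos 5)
Your route is genuinely different from the paper's, and the non-strict half of it is sound: the nested-convex-polygon monotonicity lemma (corner cutting) is correct, the containments $R_1\subseteq K\subseteq R_2$ hold, and together they give $S(\Omega)\le T(\Omega')$ for every pair of partitions simultaneously, with no case analysis, whence $\LL_1\le\LL_2$ by Theorem \ref{T2} and Corollary \ref{C3}. The genuine gap is in your last step: the corollary asserts a \emph{strict} inequality, and your strictness argument fails. Strict inequality for each pair of partitions does not survive the passage to LUB/GLB: from $S(\Omega)<T(\Omega')$ for all $\Omega,\Omega'$ one can only conclude $\hbox{LUB}(\{S\})\le\hbox{GLB}(\{T\})$ (compare $1-1/n<1+1/m$ for all $n,m$, where the LUB and the GLB are both $1$). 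Indeed, for a \emph{single} cavex segment every secant-measure is strictly below every tangent-measure, yet Theorem \ref{T2} shows the LUB and GLB coincide there; so ``the corner-cutting step is then strict'' proves nothing about the limits, because the amount cut off shrinks to zero as both partitions are refined.

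The repair needs a gap that is \emph{uniform} in the partitions. For instance, take the interior point $p$ of the inner curve farthest from the chord; since the curves meet only at the endpoints, the tangent to the inner curve at $p$ cuts off from the outer region $K$ a cap of fixed positive defect $\delta$ (length of the cap's arc minus length of the cap's chord, positive by Corollary \ref{C2}), and every inscribed polygon of the inner curve lies on the inner side of that tangent line; this yields $S(\Omega)\le T(\Omega')-\delta$ for all partitions, hence $\LL_1\le\LL_2-\delta<\LL_2$. The paper sidesteps the issue entirely by never passing to a limit: it fixes one tangent to the inner curve $C_1$, lets the endpoint tangent-chords of $C_1$ meet the outer curve at $D$ and $E$, and sandwiches $c_1<\overline{XG}+\overline{GF}+\overline{FY}\le\overline{XD}+\overline{DE}+\overline{EY}<c_2$; both outer inequalities are Corollary \ref{C2} applied against \emph{fixed} finite quantities (one tangent-measure of the inner curve, one secant-measure of the outer curve), so strictness is automatic. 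The price the paper pays, which your containment argument avoids, is a separate case when the outer curve passes above the apex $Z$ of the endpoint tangents.
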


\begin{wrapfigure}{h}{0.42\textwidth}
	\centering	
	\includegraphics[width=0.38\textwidth, trim={8mm 0mm 0mm 4mm}]{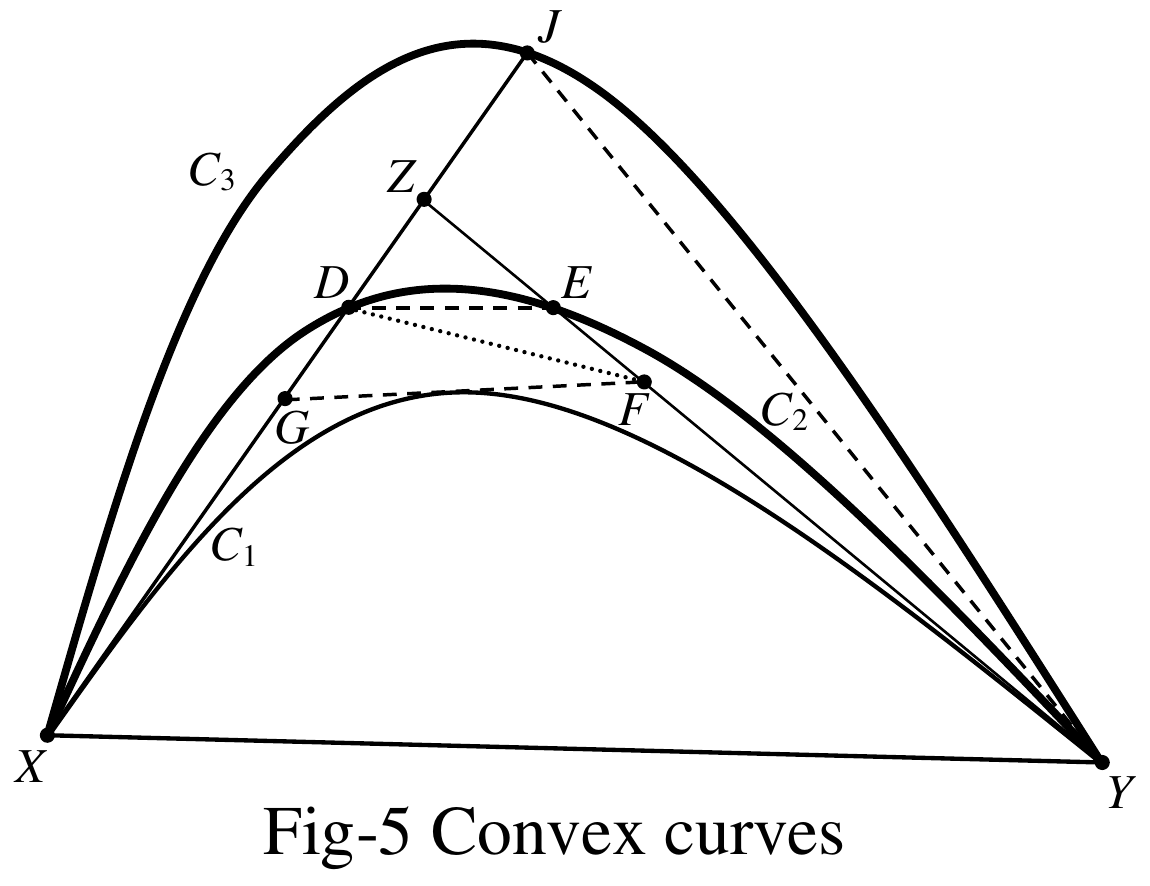}
\end{wrapfigure}

\noindent \ti{Proof}. With no loss of generality, convex curves $C_1,~C_2$ between points $X$ and $Y$ shown in Fig-5 represents the curves described in the corollary. Let $c_1,~c_2$ and $c_3$ be the arc-length of $C_1,~C_2$ and $C_3$ respectively. For $C_1$ draw tangent-chords at $X$ and $Y$ intersecting at $Z$. Let $D$ and $E$ be the points of intersection of $C_2$ with $\overline{XZ}$ and $\overline{ZY}$ respectively. Draw a tangent on $C_1$ intersecting $\overline{XZ}$ at $G$ and $\overline{ZY}$ at $F$. From triangle inequality we have $\overline{GD} + \overline{DF} > \overline{GF}$ and $\overline{DE} + \overline{EF} > \overline{DF} \RA \overline{GD} + \overline{DE} + \overline{EF}> \overline{GF}$. Then from Corollary \ref{C2} we have,
$$c_1< \overline{XG} + \overline{GF} + \overline{GY} < \overline{XG} + \overline{GD} + \overline{DE} + \overline{EF} + \overline{GY} <c_2.$$
The above inequality holds till $C_2$ passes through $Z$. For the convex curve $C_3$ going above $Z$, extend $\overline{XZ}$ to meet $C_3$ at $J$, then from Corollary \ref{C2} and triangle inequality
$$c_1 < \overline{XZ} + \overline{ZY} < \overline{XZ} + \overline{ZJ} + \overline{JY} < c_3. \eqno\square$$ 
\begin{cor} 
	The arc-length integral converges to the arc-length.
\end{cor}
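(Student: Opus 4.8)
The plan is to identify the secant-measure $S(\Omega_n)$ with a tagged Riemann sum of the integrand $g(x)=\sqrt{1+(f'(x))^2}$ and then to let one and the same sequence $\{S(\Omega_n)\}$ reveal two limits: the arc-length, via Corollary \ref{C3}, and the integral, via Riemann integrability. First I would reduce to a single cavex segment. Any curve $f(x)$ of Definition 1 decomposes into a concatenation of cavex segments (the pieces between the points where convexity changes), and both the arc-length and the integral $\int_a^b g\,dx$ are additive over such a decomposition, so it suffices to establish $\LL=\int_c^d g\,dx$ on one cavex segment $I=[c,d]$.

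On $I$ I would take a nested sequence $\Omega_1\subset\Omega_2\subset\cdots$ of partitions whose mesh tends to zero. For each $\Omega_n$ and each subinterval $[x_{j-1},x_j]$ the Mean Value Theorem supplies a point $\xi_j\in(x_{j-1},x_j)$ with $f(x_j)-f(x_{j-1})=f'(\xi_j)(x_j-x_{j-1})$, so that $s_j=\sqrt{(x_j-x_{j-1})^2+(f(x_j)-f(x_{j-1}))^2}=\sqrt{1+(f'(\xi_j))^2}\,(x_j-x_{j-1})=g(\xi_j)(x_j-x_{j-1})$. Summing over $j$ exhibits $S(\Omega_n)=\sum_{j=1}^{n} g(\xi_j)(x_j-x_{j-1})$ as a tagged Riemann sum of $g$ on the partition $\Omega_n$, with tags $\xi_j$ determined by the Mean Value Theorem.

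Next I would show that $g$ is Riemann integrable. On a cavex segment $f$ is convex or concave, hence $f'$ is monotonic; and since a tangent exists at every point (Definition 1), $f'$ is defined throughout $[c,d]$. A derivative enjoys the intermediate-value property (Darboux), and a monotonic function with that property admits no jumps, so $f'$ is continuous. Therefore $g=\sqrt{1+(f')^2}$ is continuous on $[c,d]$ and so Riemann integrable, and $\int_c^d g\,dx$ is well defined.

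Finally I would read off the two limits of the one sequence $\{S(\Omega_n)\}$. By Lemma \ref{L2} it is strictly increasing, and by Corollary \ref{C3} its supremum is the arc-length, so $S(\Omega_n)\uparrow\LL$. On the other hand, because $g$ is Riemann integrable and the mesh of $\Omega_n$ tends to zero, every sequence of tagged Riemann sums of $g$ converges to $\int_c^d g\,dx$; in particular the $S(\Omega_n)$ do. Uniqueness of the limit of the single sequence $\{S(\Omega_n)\}$ then forces $\LL=\int_c^d\sqrt{1+(f'(x))^2}\,dx$, and summing over the cavex pieces gives $\LL=\int_a^b\sqrt{1+(f'(x))^2}\,dx$. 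I expect the integrability step to be the main obstacle: one must secure that the nested refining sequence can be chosen with mesh $\to 0$ (so that its Riemann sums genuinely converge to the integral and not merely to $\LL$), and that cavexity yields enough regularity of $f'$ for $g$ to be integrable; once both limits of the increasing sequence $\{S(\Omega_n)\}$ are in hand, matching them is immediate.
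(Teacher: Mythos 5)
Your proof is correct and follows the same route as the paper's own: both use the Mean Value Theorem to identify the secant-measure $S(\Omega_n)$ with a tagged Riemann sum of $g(x)=\sqrt{1+(f'(x))^2}$, and then invoke Corollary \ref{C3} (LUB of secant-measures $=$ arc-length) to equate the two limits. The difference is one of rigor, in your favor. The paper simply \emph{defines} the arc-length integral as the limit of the MVT-tagged sums over nested regular partitions, so it never asks whether $g$ is Riemann integrable, i.e., whether arbitrary tagged sums converge to a common value; your Darboux argument (on a cavex segment $f'$ is monotone, every derivative has the intermediate-value property, and a monotone Darboux function has no jumps, hence is continuous) closes exactly that gap and is what licenses the sentence ``every sequence of tagged Riemann sums converges to the integral.'' Likewise, your explicit insistence that the nested partitions have mesh tending to zero makes precise a hypothesis the paper uses only tacitly (its nested ``regular'' partitions, which in fact force something like repeated halving). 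One caveat: your opening reduction, that every curve of Definition 1 decomposes into finitely many cavex segments, is an over-claim --- convexity can change infinitely often on a closed interval --- but the paper's proof carries the same silent restriction behind its ``with no loss of generality, let $[c,d]$ be the domain of the cavex segment,'' so this does not put your argument behind the paper's; it only means both establish the corollary for (finite concatenations of) cavex segments.
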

\begin{proof}
	With no loss of generality let $[c, d]$ be the domain of the cavex 
segment of $f(x)$ differentiable on $(c,d)$. In Calculus, for defining 
arc-length all partitions taken are regular. For $n \in \N$, 
let $\Omega_n = \{ c = x_0 < x_1 < \ldots < 
x_{n-1} < x_n = d \}$ be a regular partition with $\Delta x$ as the sub-interval length, where $\Delta x \ra 0$ as $n \ra \infty$ and for $j =0, \ldots , n$, let $y_j = 
f(x_j)$. For $j =1, \ldots , n$, by mean value theorem there exists a $\bar{x}_j \in (x_{j-1}, x_j)$ 
such that $(y_j-y_{j-1}) = f'(\bar{x}_j)(x_j-x_{j-1})$, where 
$f'(\bar{x}_j)=(df/dx)|_{_{\bar{x}_j}}$. Then substituting for $(y_j-y_{j-1})$ in the following sum,
	\begin{equation}
	\underset{j=1}{\overset{n}{\sum}} \sqrt{(x_j-x_{j-1})^2 + (y_j-y_{j-1})^2 }. \label{al}
	\end{equation}
and taking the limit $n \ra \infty$ gives
	\begin{equation}
	\underset{n \ra \infty}{\lim} \underset{j=1}{\overset{n}{\sum}} \sqrt{1+[f'(\bar{x}_j)]^2}~ \Delta x_n = \overset{d}{\underset{c}{\int}} \sqrt{1+[f'(x)]^2} ~dx, \label{alf}
	\end{equation}
	defined as the arc-length integral, where $\Omega_1 \subset \Omega_2 \cdots \subset \Omega_{n-1} \subset \Omega_n \cdots$ are infinite regular partitions on $[c,d]$ with $\Delta x \ra 0$ as $n \ra \infty$. Since \eqref{al} is the secant-measure of the arc-length of $f(x)$ on $[c,d]$ for $\Omega_n$, \eqref{alf} will be the LUB of the secant-measures hence by Corollary \ref{C3} is the arc-length.
\end{proof} 
\begin{cor} \label{C7}
	$\C=2 \pi r$.
\end{cor}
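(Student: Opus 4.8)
The plan is to recognise that the circumference $\C$ is, by Definition~2, just the arc-length of the circle, and then to read off its value from the machinery already built: the secant/tangent-measures of Theorem~\ref{T2} on the one hand, and the constant $\pi$ of Theorem~\ref{T1} on the other. First I would note that a circle is not the graph of a single function, so it must be split into finitely many cavex segments before Theorem~\ref{T2} applies; the natural choice is the $k$ arcs subtended by the sides of the zeroth-stage inscribed polygon, each of which is convex and, after a rotation (which leaves the Euclidean metric and hence arc-length invariant), is the graph of a function on a closed interval. The nested bisection construction of Section~2 then supplies, on each such arc, precisely an infinite chain of nested partitions $\Omega_1 \subset \Omega_2 \subset \cdots$ as demanded by Lemma~\ref{L2} and Theorem~\ref{T2}, the $n^{\text{th}}$ stage having $2^n k$ vertices in all.

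The crux is the identification of the two polygon perimeters with the two measures of Theorem~\ref{T2}. The sides of the inscribed polygon are exactly the secant-chords joining consecutive vertices on the circle, so the inscribed perimeter $P_n^L = 2^n k L_n$ is the secant-measure $S(\Omega_n)$ summed over the $k$ arcs; and the sides of the circumscribed polygon are assembled from tangents drawn at those same vertices, so the tangent-chords run from each point of contact to the adjacent tangent-intersection point, making the circumscribed perimeter $P_n^U = 2^n k U_n$ the tangent-measure $T(\Omega_n)$ summed over the $k$ arcs. Applying Theorem~\ref{T2} on each arc and adding, the arc-length of the whole circle satisfies
\[
\C = \text{LUB}(\{P_n^L\}) = \text{GLB}(\{P_n^U\}).
\]

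It then remains only to evaluate these two bounds, and here Theorem~\ref{T1} does the work. That theorem gives $\pi = \text{LUB}(\{P_n^L/2r\}) = \text{GLB}(\{P_n^U/2r\})$, and since $2r$ is a fixed positive constant it factors out of both the supremum and the infimum. Hence $\text{LUB}(\{P_n^L\}) = 2r\,\text{LUB}(\{P_n^L/2r\}) = 2\pi r$ and, identically, $\text{GLB}(\{P_n^U\}) = 2\pi r$, whence $\C = 2\pi r$.

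I expect the genuine obstacle to be the decomposition step, not the arithmetic: one must justify that the single arc-length of the closed circle equals the sum of the arc-lengths of the $k$ cavex arcs (additivity of arc-length over a subdivision), and that the bisection partitions are cofinal enough that the LUB and GLB taken along this particular chain agree with the LUB and GLB over all partitions used in Theorem~\ref{T2}. The decisive fact is that $\{L_n\}$ and $\{U_n\}$ are null (Lemma~\ref{L1}), so the mesh of $\Omega_n$ tends to zero; this is precisely what forces the bisection sequence to be adequate and lets Theorem~\ref{T2} deliver the equality of bounds.
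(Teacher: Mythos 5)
Your proposal is correct and takes essentially the same route as the paper's own proof: identify the inscribed and circumscribed perimeters $P_n^L$, $P_n^U$ with the secant- and tangent-measures, apply Theorem~\ref{T2} to conclude their common LUB/GLB is $\C$, and apply Theorem~\ref{T1} to evaluate that common value as $2\pi r$. The extra care you take over decomposing the circle into $k$ cavex arcs, additivity of arc-length, and the adequacy of the nested bisection partitions spells out details that the paper's two-line proof leaves implicit, but it is the same argument, not a different one.
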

\begin{proof}
	From Theorem \ref{T1} we have $\hbox{LUB}(\{ P_n^L \})= \hbox{GLB}(\{ P_n^U \})=2 \pi r$. Since $P_n^L$ is secant-measure and $P_n^U$ is tangent-measure, from Theorem \ref{T2} $\hbox{LUB}(\{ P_n^L \})= \hbox{GLB}(\{ P_n^U \})=\C \RA \C=2 \pi r$.
\end{proof}
\begin{cor}
	The statements ``The ratio $\A/r^2$ is a constant, independent of $r$", and ``The ratio $\C/2r$ is a constant, independent of $r$" are equivalent.
\end{cor}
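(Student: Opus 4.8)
The plan is to reduce the stated equivalence to a single pointwise identity, namely that for each radius $r$ the two ratios are the same number,
\[
\frac{\A}{r^2} \;=\; \frac{\C}{2r}.
\]
Granting this identity for every $r$, the biconditional is immediate: the maps $r \mapsto \A/r^2$ and $r \mapsto \C/2r$ are literally the same function of $r$, so one is constant (independent of $r$) exactly when the other is. As a bonus, the identity shows that definitions (i) and (ii) do not merely each yield \emph{some} constant — they yield the \emph{same} constant $\pi$, which is the substance behind ``equivalence of the two definitions''.

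The key step is therefore to establish that identity, and I would read it straight off the two supremum characterizations already proved. By Corollary~\ref{C7}, $\C = \mathrm{LUB}(\{P_n^L\})$; since $2r$ is a fixed positive number for a given circle, dividing by it and absorbing it into the supremum gives $\C/2r = \mathrm{LUB}(\{P_n^L/2r\})$. By Theorem~\ref{T1}, $\mathrm{LUB}(\{P_n^L/2r\}) = \A/r^2$. Chaining the two yields $\C/2r = \A/r^2$ for every $r$. Equivalently, in the finite-stage form used inside Lemma~\ref{L1} and Theorem~\ref{T1} one has $A_n^L = \tfrac{r}{2}P_n^L$, and taking suprema recovers the classical relation $\A = \tfrac{r}{2}\C$; the circumscribed sequence supplies the same relation through $A_n^U = \tfrac{r}{2}P_n^U$ and the matching greatest lower bounds, which serves as a cross-check.

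With the identity secured I would close the argument symmetrically. If $\A/r^2$ equals a constant $c$ for all $r$, then $\C/2r = \A/r^2 = c$ for all $r$, so $\C/2r$ is the same constant; the converse is the identical statement with the two ratios interchanged. Hence the two constancy statements are equivalent. I do not anticipate a genuine obstacle: every ingredient is already in hand, and the only point needing the least care is the interchange of the positive constant $1/2r$ with the supremum, which is legitimate precisely because $1/2r$ is fixed and positive for the circle under consideration. The real content — that area and circumference are bound together by $\A = \tfrac12 r\,\C$ — is exactly what forces definitions (i) and (ii) to stand or fall together.
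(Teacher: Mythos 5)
Your proposal is correct and follows essentially the same route as the paper: the paper's proof is the one-line chain ``from Theorem~\ref{T1} and Corollary~\ref{C7}, $\pi=\A/r^2=\C/2r$,'' which is exactly your pointwise identity obtained from the same two results. Your additional framing --- that the identity makes the two ratios the same function of $r$, so the biconditional is automatic --- is a nice explicit articulation of what the paper leaves implicit, but it is not a different argument.
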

\begin{proof}
	From Theorem \ref{T1} and Corollary \ref{C7}, $\pi=\A/r^2=\C/2r$.
\end{proof}
\noindent The above corollary shows the equivalence of the classical definitions (i) and (ii).\\\\
\tb{Acknowledgements:} I sincerely thank, Mr. M.A.Prasad for many fruitful discussions and suggestions. Heartfelt thanks to Prof. V.Vetrivel and the reviewer for their critical comments.

\end{document}